\newtheorem{theorem}{Theorem}[section]
\newtheorem{corollary}[theorem]{Corollary}
\newtheorem{lemma}[theorem]{Lemma}
\newtheorem{example}{Example}
\theoremstyle{definition}
\newtheorem{definition}[theorem]{Definition}
\newtheorem{remark}[theorem]{Remark}
\title{Equivalence of topological dynamics without well-posedness}
\author{Tomoharu Suda}
\address{Faculty of Mathematics, Keio University}
\email{tomoharu.suda@keio.jp}
\begin{document}
\begin{abstract}
The notion of topological equivalence plays an essential role in the study of dynamical systems of flows. However, it is inherently difficult to generalize this concept to systems without well-posedness in the sense of Hadamard. In this study, we formulate a notion of ``topological equivalence" between such systems based on the axiomatic theory of topological dynamics proposed by Yorke, and discuss its relation with the usual definition. During this process, we generalize Yorke's theory to the action of topological groups.
\end{abstract}
\maketitle
\section{Introduction}

Topological classification is a basic problem in the study of dynamical systems. In the case of maps, this task is achieved by considering the topological conjugacy. In the case of flows, it is common to classify systems based on the notion of topological equivalence \cite{robinson1998dynamical, irwin2001smooth, katok1997introduction}. The definition of topological equivalence is usually stated as follows.
\begin{definition}[Topological equivalence]\label{top_equiv}
Let $X$ and $Y$ be topological spaces. Two flows $\Phi : \mathbb{R} \times X \to X$ and $\Psi : \mathbb{R} \times Y \to Y$ are \emph{topologically equivalent} if there exists a homeomorphism $h: X \to Y$ such that each orbit of $\Phi$ is mapped to an orbit of $\Psi$ preserving the orientation of the orbit. 
\end{definition}

 This definition may be rephrased as follows: there exist a homeomorphism $h: X \to X'$ and a map $\tau: \mathbb{R}\times X \to \mathbb{R}$ such that
\begin{equation}\label{equiv_cond}
	\Psi(\tau(t,x), h(x)) = h(\Phi(t,x))
\end{equation}
for all $(t,x) \in \mathbb{R}\times X,$ where $\tau(\cdot, x)$ is monotonically increasing and bijective, and $\tau(0,x) = 0$ for all $x \in X$.

Therefore, the parametrization is ignored with the exception of the orientation. As a result of this property, there is an inherent difficulty in generalizing the notion of topological equivalence to the problem of classification of systems without well-posedness in the sense of Hadamard \cite{tikhonov1977solutions}, as there can be different solutions distinguished only by the parametrization. For example, the following three systems on $\mathbb{R}$ are indistinguishable if we use the same criteria as in Definition \ref{top_equiv}:
\begin{enumerate}
	\item $\dot x \in [1/2,1]$.
	\item $\dot x \in \{1/2,1\}$.
	\item $\dot x = 1.$
\end{enumerate}
Here systems (1) and (2) are differential inclusions. The details on the differential inclusions can be found in \cite{aubin_differential}, for example. 

Thus, in the classification of systems without well-posedness, it is necessary to consider a type of ``topological equivalence," which does not completely ignore the parametrization. 

There are several notions of ``systems" that can be used as the objects of our equivalence. While set-valued flows are often used regarding the consideration of stability, it appears difficult to formulate the condition of orbit preservation in this setting \cite{ball2000continuity, lamb2015topological}. 

A more feasible candidate is the axiomatic theory of the solutions of ordinary differential equations. In particular, Filippov's formalism and Yorke's formalism have potential as the foundation of our methods, although neither considers the morphism between systems \cite{filippov_basic, yorke1969spaces}. In these theories, the solution set of an ordinary differential equation is considered as a set of partial maps on $\mathbb{R}$ satisfying certain axioms. While Filippov's formalism is fairly developed and applicable to a wide variety of problems, it is based on partial maps with closed domains and requires more axioms, which makes the generalization or the consideration of morphisms relatively difficult. Yorke's formalism is based on partial maps with open domains; only a few axioms are required and it has been announced that basic results of the topological dynamics have been generalized. However, the proofs of these results have apparently not been published, as mentioned in \cite{himmelberg1982note}. 

In this article, we consider Yorke's formalism in the setting of the action of topological groups, which can be seen as a generalization of topological dynamics \cite{abs_top_dyn, vries_elements}. In particular, we define the notion of morphism and thereby obtain a notion of ``topological equivalence" applicable to systems without well-posedness. During this process, we state and prove some generalization of the basic results announced in \cite{yorke1969spaces}. 

The remainder of this article is organized as follows. In Section 2, we define the space of partial maps and introduce Yorke's formalism of topological dynamics based on the star-construction. In Section 3,  we consider the shift map of the partial maps and its relationship with the topological transformation groups and dynamical behavior. In Section 4,  we introduce the notion of morphism of the star-construction and equivalences of systems without well-posedness, and discuss their relation with the usual definition of topological equivalence. In Section 5, we consider some examples to illustrate the formalism developed in this article. Finally, in Section 6, we provide concluding remarks.

\section{Space of partial maps and Yorke's formalism}
In this section, we introduce the basic concepts and constructions used throughout the article. In the following, $X$ is a second-countable metric space and $G$ is a locally compact second-countable Hausdorff topological group. Note that $G$ is metrizable by the Birkhoff-Kakutani theorem. 
\begin{definition}[Partial maps]
A continuous map $\phi: D \to X$ a \emph{partial map from $G$ to $X$} if $D \subset G$ is a nonempty open set.   
 
The set of all partial maps is denoted by $C_p(G, X)$. For each $\phi: D \to X,$ we set $\mathrm{dom}\,{\phi} : =D.$ 

A partial map $\phi\in C_p(G,X)$ with a nonempty connected domain is \emph{maximally defined} if, for all $\psi\in C_p(G,X)$ with a nonempty connected domain, the condition $\mathrm{dom}\, \phi \subset \mathrm{dom}\, \psi$ and $\phi=\psi$ on $\mathrm{dom}\, \phi$ implies $\phi=\psi.$
\end{definition}

Our aim is to topologize $C_p(G,X)$ by introducing the compact-open topology. However, as the domains may vary, a slight modification is needed. The following construction is obtained from Abd-Allah and Brown \cite{abd1980compact}.

Let $\omega \notin X$ and $\hat X:= X \cup \{\omega\}.$ The open sets of $\hat X$ are defined by  $\hat X$ and the open sets of $X.$ Now we define a bijection $\mu : C_p(G,X) \to C(G,\hat X)$, where $C(G,\hat X)$ is the set of all continuous maps from $G$ to $\hat X$ topologized by the usual compact-open topology, by setting
\[
	\mu(\phi)(x) := \begin{cases}
					\phi(x) & (x \in \mathrm{dom}\, \phi)\\
					\omega & (\text{otherwise}).
					\end{cases}
\]
The topology on $C_p(G,X)$ is defined so that it makes $\mu$ a homeomorphism. It has a subbasis given by the sets of the form
\[
	W(K, U ) := \{\phi \in C_p(G,X) \mid K \subset \mathrm{dom}\, \phi , \phi(K) \subset U \}.
\] 

The compact-open topology coincides with the topology of compact convergence. 

\begin{lemma}\label{seq_lemma}
The compact-open topology coincides with the topology of compact convergence. Namely, $C_p(G,X)$ is a second countable space and $\phi_n \to \phi$ in $C_p(G,X)$ as $n \to \infty$ if and only if, for all compact subset $K \subset \mathrm{dom}\, \phi,$ $K\subset \mathrm{dom}\, \phi_n $ for sufficiently large $n$ and $\sup_{t \in K} d(\phi_n(t), \phi(t)) \to 0$ as $n \to \infty$.
\end{lemma}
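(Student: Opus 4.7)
The plan is to establish second countability and the sequential characterization in parallel, leveraging the countable structures of $G$ and $X$. Since $G$ is locally compact, second countable and Hausdorff, it admits a countable basis $\{B_n\}_{n \in \mathbb{N}}$ of precompact open sets; likewise, since $X$ is second countable and metric, it admits a countable basis $\{U_m\}_{m \in \mathbb{N}}$ consisting of open balls of rational radius centered at a countable dense set.

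For second countability, I would show that the countable collection $\mathcal{S} := \{W(\overline{B_n}, U_m)\}_{n,m \in \mathbb{N}}$ generates the compact-open topology. Given any subbasis element $W(K,U)$ and any $\phi \in W(K,U)$, I would argue pointwise: for each $t \in K$, continuity of $\phi$ produces some $U_{m(t)} \subset U$ containing $\phi(t)$ and an open neighborhood $V_t$ of $t$ in $\mathrm{dom}\,\phi$ with $\phi(V_t) \subset U_{m(t)}$; local compactness of $G$ then yields some $B_{n(t)}$ with $t \in B_{n(t)} \subset \overline{B_{n(t)}} \subset V_t$. Compactness of $K$ gives a finite subcover $B_{n(t_1)}, \ldots, B_{n(t_k)}$, and a short verification shows $\phi \in \bigcap_{i=1}^k W(\overline{B_{n(t_i)}}, U_{m(t_i)}) \subset W(K,U)$, which identifies $\mathcal{S}$ as a subbasis and yields second countability.

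The backward direction of the convergence characterization is the easy half: if $\phi$ lies in a basic open set $\bigcap_{i} W(K_i, U_i)$, then each $\phi(K_i)$ is a compact subset of the open set $U_i$ in the metric space $X$, so some $\varepsilon > 0$ makes the $\varepsilon$-neighborhoods of $\phi(K_i)$ in $X$ sit inside $U_i$; uniform $\varepsilon$-closeness on $K_i$ together with $K_i \subset \mathrm{dom}\,\phi_n$ eventually forces $\phi_n$ into that basic open set eventually. For the forward direction, given a compact $K \subset \mathrm{dom}\,\phi$ and $\varepsilon > 0$, I would cover $K$ by compact sets $\tilde K_1, \ldots, \tilde K_k \subset \mathrm{dom}\,\phi$, with $\phi(\tilde K_i) \subset B(\phi(t_i), \varepsilon/2)$, built using local compactness of $G$ and continuity of $\phi$; then $\phi$ lies in the open set $\bigcap_i W(\tilde K_i, B(\phi(t_i), \varepsilon/2))$, and for $\phi_n$ eventually in this intersection the triangle inequality gives $d(\phi_n(t), \phi(t)) < \varepsilon$ uniformly in $t \in K$, as well as $K \subset \bigcup_i \tilde K_i \subset \mathrm{dom}\,\phi_n$.

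The main technical point, used in both parts, is the covering lemma that for any compact $K \subset \mathrm{dom}\,\phi$ and any open $U \supset \phi(K)$ one can produce finitely many compact neighborhoods of points of $K$, lying inside $\mathrm{dom}\,\phi$, on which $\phi$ has small oscillation and whose image is trapped in $U$; once this covering fact is in hand, both the identification of $\mathcal{S}$ as a subbasis and the forward direction of the convergence statement reduce to the triangle inequality. I do not expect a substantive obstacle beyond handling these compact-neighborhood constructions carefully, since the non-Hausdorff space $\hat X$ need never be invoked directly if one works throughout in terms of $C_p(G,X)$ and the subbasis $\{W(K,U)\}$.
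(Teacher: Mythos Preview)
Your proposal is correct and, for the two directions of the convergence characterization, follows essentially the same route as the paper: the forward direction uses a finite cover of $K$ by relatively compact neighborhoods on which $\phi$ has oscillation $<\varepsilon/2$ and then appeals to the triangle inequality, while the backward direction uses the positive distance from the compact set $\phi(K_i)$ to the complement of $V_i$.

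The one genuine difference is in the treatment of second countability. The paper does not prove this at all but simply cites Proposition~2.1 of \cite{alvarez2016topological}, whereas you supply a direct argument by exhibiting the countable subbasis $\{W(\overline{B_n},U_m)\}$ and verifying via a covering argument that it generates the topology. Your argument is self-contained and standard, and it has the advantage of making the paper independent of the external reference; the paper's approach is shorter but relies on the reader tracking down the cited result. Both are perfectly acceptable, and there is no substantive gap in your plan.
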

\begin{proof}
The proof of the second countability of $C_p(G,X)$ can be found in \cite{alvarez2016topological} (Proposition 2.1).
%
Though the rest of the proof is similar to that of the classical one, we present it for the sake of completeness.

Let $\phi_n \to \phi$ in $C_p(G,X)$ as $n \to \infty.$ Fix a compact subset $K \subset \mathrm{dom}\, \phi$ and a positive number $\epsilon >0.$ For each $x \in K,$ we may find a relatively compact neighborhood $U_x$ of $x$ such that 
\[
	\begin{aligned}
		& U_x \subset \mathrm{dom}\, \phi,\\
		&\phi(U_x)  \subset B_{\epsilon/3}(\phi(x)),\\
		&\phi(\bar U_x)  \subset B_{\epsilon/2}(\phi(x)).
	\end{aligned}
\]
Because $K$ is compact, there exist $x_1, x_2, \cdots, x_n \in K$ with $ K \subset \bigcup_{i=1}^{n} U_{x_i}.$ Let $K_i := \bar U_{x_i}\cap K$ and $V_i := B_{\epsilon/2}(\phi(x)).$ Then we have $\phi \in \bigcap_{i=1}^{n} W(K_i, V_i).$ Therefore, $\bigcap_{i=1}^{n} W(K_i, V_i)$ is a neighborhood of $\phi$ in $C_p(G,X)$ and there exists $n_0 \in \mathbb{N}$ such that  $\phi_n \in \bigcap_{i=1}^{n} W(K_i, V_i)$ for all $n \geq n_0.$ 

Fix $x\in K.$ Then $x \in K_i$ for some $i$ and we have for all $n \geq n_0,$
\[
	d(\phi(x),\phi_n(x)) \leq d(\phi(x), \phi(x_i))+d(\phi(x_i), \phi_n(x)) < \frac{\epsilon}{2}+\frac{\epsilon}{2} = \epsilon.
\]
Therefore, we conclude that $\sup_{t \in K} d(\phi_n(t), \phi(t)) \leq \epsilon.$

Conversely, let $\phi_n \to \phi$ in the compact convergence and take a neighborhood $\bigcap_{i=1}^{n} W(K_i, V_i)$ of $\phi.$ For each $i$, there exists $r_i > 0$ with 
\[
	B_{r_i}(\phi(K_i)) \subset V_i.	
\]
Let $r := \min(r_i).$ For each $i$, by the definition of compact convergence, there exists $n_i \in \mathbb{N}$ with
\[
	\begin{aligned}
		&K_i \subset \mathrm{dom}\, \phi_n,\\
		&\sup_{t \in K_i} d(\phi_n(t), \phi(t)) < r
	\end{aligned}
\] 
for all $n \geq n_i.$ If we let $n_0 = \max(n_i),$ we have $\sup_{t \in K_i} d(\phi_n(t), \phi(t)) < r$ for all $n \geq n_0$ and $i = 1,2,\cdots, n.$ Therefore, we deduce for each $t \in K_i,$
\[
	\phi_n(t) \in B_r(\phi(t)) \subset V_i.
\]
Thus, we obtain $\phi_n \in \bigcap_{i=1}^{n} W(K_i, V_i).$ 
\end{proof}
 From the next lemma, we may obtain maximally-defined partial maps.

\begin{lemma}
Let $\phi \in C_p(G,X)$ with a nonempty connected domain. Then, there exists a partial map $\bar \phi \in C_p(G,X)$ which is maximally defined and satisfies $\phi = \bar \phi$ on $\mathrm{dom}\, \phi$.
\end{lemma}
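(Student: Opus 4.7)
The plan is to construct $\bar\phi$ via Zorn's lemma applied to the poset of extensions of $\phi$ with connected domains. Let $\mathcal{E}$ denote the collection of $\psi \in C_p(G,X)$ whose domain is connected, contains $\mathrm{dom}\,\phi$, and on which $\psi$ agrees with $\phi$; order $\mathcal{E}$ by declaring $\psi_1 \preceq \psi_2$ iff $\mathrm{dom}\,\psi_1 \subset \mathrm{dom}\,\psi_2$ and $\psi_2|_{\mathrm{dom}\,\psi_1} = \psi_1$. The poset is nonempty since $\phi \in \mathcal{E}$, and I would observe that once $(\mathcal{E},\preceq)$ has a maximal element, that element is automatically maximally defined in the sense of the definition. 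It thus suffices to verify that every chain in $\mathcal{E}$ has an upper bound.

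Given a chain $\{\psi_\alpha\}_{\alpha \in A} \subset \mathcal{E}$, I would set $D^* := \bigcup_\alpha \mathrm{dom}\,\psi_\alpha$ and define $\psi^* : D^* \to X$ by $\psi^*(g) := \psi_\alpha(g)$ whenever $g \in \mathrm{dom}\,\psi_\alpha$; the chain property makes this unambiguous. Then $D^*$ is open as a union of open sets and connected because each $\mathrm{dom}\,\psi_\alpha$ is connected and contains the common nonempty connected set $\mathrm{dom}\,\phi$. Continuity of $\psi^*$ is then a local check: each $g \in D^*$ has an open neighborhood $\mathrm{dom}\,\psi_\alpha$ on which $\psi^*$ coincides with the continuous map $\psi_\alpha$. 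Hence $\psi^* \in \mathcal{E}$ is an upper bound of the chain.

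Zorn's lemma then produces a maximal element $\bar\phi \in \mathcal{E}$. To match the stated definition, I would finish by remarking that any $\psi \in C_p(G,X)$ with nonempty connected domain satisfying $\mathrm{dom}\,\bar\phi \subset \mathrm{dom}\,\psi$ and $\psi = \bar\phi$ on $\mathrm{dom}\,\bar\phi$ also extends $\phi$; such a $\psi$ lies in $\mathcal{E}$ and dominates $\bar\phi$ in the ordering, so maximality forces $\psi = \bar\phi$.

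The principal subtlety, which is also the reason the definition restricts to connected domains, is that two extensions of $\phi$ with incomparable domains can in general disagree on their overlap (there is no unique-continuation-type rigidity available in this setting), so a naive ``union of all extensions of $\phi$'' would not yield a well-defined map; the chain structure is essential for the gluing. Beyond this observation, every step is routine point-set topology, so I do not expect any serious obstacle.
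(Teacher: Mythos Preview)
Your argument is correct and is essentially the same Zorn's-lemma argument as the paper's: both order the set of connected-domain extensions of $\phi$ by inclusion-and-restriction, build an upper bound for each chain by taking the union of domains, and invoke Zorn. Your write-up is in fact more complete, since you explicitly verify connectedness and continuity of the chain supremum and check that a maximal element of $\mathcal{E}$ is maximally defined in the sense of the definition, steps the paper leaves implicit.
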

\begin{proof}
We define an order on the set $C_c:=\{ \phi \in C_p(G,X)\mid \mathrm{dom}\, \phi \text{ is connected}\}$ by setting $\phi \leq \psi$ if $\mathrm{dom}\, \phi \subset \mathrm{dom}\, \psi$ and $\phi(x) = \psi(x)$ for all $x \in \mathrm{dom}\, \phi.$ This is clearly a partial order on $C_c.$

Fix $\phi \in C_c$ and consider $\mathcal{C}_\phi := \{\psi \in C_c\mid \phi \leq \psi\}.$ As $\phi \in \mathcal{C}_\phi,$ $\mathcal{C}_\phi$ is nonempty. Let $\mathcal{S}$ be a chain in $\mathcal{C}_\phi$. We define a map $\bigvee \mathcal{S}$ by setting
\[
\begin{aligned}
	\mathrm{dom}\, \bigvee \mathcal{S} &:= \bigcup_{\psi \in \mathcal{S}} \mathrm{dom}\, \psi\\
	\bigvee \mathcal{S}(x) &:= \psi(x) \text{ if } x \in \mathrm{dom}\, \psi.
\end{aligned}
\]
Because $\mathcal{S}$ is totally ordered, $\bigvee \mathcal{S}$ is well-defined, $\bigvee \mathcal{S} \in \mathcal{S},$ and $\psi \leq \bigvee \mathcal{S} $ for all $\psi \in \mathcal{S}$. Therefore, from Zorn's lemma, we see that $\mathcal{C}_\phi$ has a maximal element, which is the desired partial map.
\end{proof}
 
Maximally defined partial maps have properties similar to those of the solution curves of an ordinary differential equation. The set of all maximally defined partial maps is denoted by $C_s(G, X)$. Clearly, $C(G,X)$ with compact-open topology is a subspace of $C_s(G, X)$.

We define the \emph{orbit} $\mathcal{O}(\phi)$ of $\phi \in C_s(G, X)$ by
\[
	\mathcal{O}(\phi) := \{\phi(g) \mid g \in \mathrm{dom}(\phi)\}.
\]

In general, the space $C_s(G, X)$ is not Hausdorff and the limit of a sequence is not unique. However, we have the following property.
\begin{lemma}\label{lem_conv_unique}
Let $\{\phi_n \}_n$ be a convergent sequence  in  $C(G, X)$ with $\phi_n \to \phi$ as $n \to \infty.$
If $\phi_n \to \psi$ as $n \to \infty $ in  $C_s(G, X),$ then $\phi = \psi.$

Consequently, if $\{\phi_{n_i}\}$ is a subsequence of $\{\phi_n \}_n$ convergent in $C_s(G, X),$ $\phi_{n_i} \to \phi$ as $i \to \infty$ in $C_s(G, X).$
\end{lemma}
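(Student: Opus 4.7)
The plan is to apply Lemma \ref{seq_lemma} to both convergence hypotheses to obtain uniform convergence on every compact subset of $\mathrm{dom}\,\psi$, deduce that $\phi$ and $\psi$ agree on $\mathrm{dom}\,\psi$, and then invoke the maximality of $\psi$ to upgrade the agreement to all of $G$.

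First, I would unpack the two hypotheses via Lemma \ref{seq_lemma}. From $\phi_n \to \psi$ in $C_s(G,X)$, for every compact $K \subset \mathrm{dom}\,\psi$ the inclusion $K \subset \mathrm{dom}\,\phi_n$ holds trivially (since $\mathrm{dom}\,\phi_n = G$) and $\sup_{t \in K} d(\phi_n(t), \psi(t)) \to 0$. From $\phi_n \to \phi$ in $C(G,X)$, the same lemma gives $\sup_{t \in K} d(\phi_n(t), \phi(t)) \to 0$ on every compact $K \subset G$, in particular on every compact $K \subset \mathrm{dom}\,\psi$. Fixing any $g \in \mathrm{dom}\,\psi$ and choosing (by local compactness of $G$) a compact neighborhood $K$ of $g$ contained in $\mathrm{dom}\,\psi$, the uniqueness of uniform limits on $K$ forces $\phi(g) = \psi(g)$. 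Hence $\phi|_{\mathrm{dom}\,\psi} = \psi$.

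At this stage, $\phi \in C(G,X) \subset C_s(G,X)$ has domain $G \supset \mathrm{dom}\,\psi$ and equals $\psi$ on $\mathrm{dom}\,\psi$. Applying the maximally-defined property of $\psi$ (with the extension $\phi$ playing the role of the hypothetical larger partial map) yields $\phi = \psi$. The subsequence assertion is then immediate: any subsequence $\{\phi_{n_i}\}$ still converges to $\phi$ in $C(G,X)$, so whenever it converges to some limit $\phi'$ in $C_s(G,X)$, the first part, applied to $\{\phi_{n_i}\}$, gives $\phi' = \phi$.

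I expect the one subtle point to be the last step: the definition of ``maximally defined'' demands that the extending partial map have a nonempty connected domain, so for the maximality of $\psi$ to apply directly to $\phi$ one needs $G$ to be connected (consistent with the embedding $C(G,X) \hookrightarrow C_s(G,X)$). If $G$ is disconnected, the same argument should be carried out on the connected component of $G$ containing $\mathrm{dom}\,\psi$ (which exists because $\mathrm{dom}\,\psi$ is connected by definition of $C_s(G,X)$), and the restriction of $\phi$ to this component plays the role of the connected extension. Everything else in the argument is a routine combination of Lemma \ref{seq_lemma} with the uniqueness of uniform limits.
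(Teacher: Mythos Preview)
Your argument is correct and follows the same route as the paper: use Lemma~\ref{seq_lemma} to conclude $\phi=\psi$ on $\mathrm{dom}\,\psi$, then invoke the maximality of $\psi\in C_s(G,X)$ to obtain $\phi=\psi$. Your remark about connectedness of $G$ is a genuine subtlety the paper glosses over; the paper sidesteps it by having already asserted $C(G,X)\subset C_s(G,X)$, which implicitly presumes $G$ connected, and your suggested fix via the connected component containing $\mathrm{dom}\,\psi$ is the right patch in the general case.
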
 
\begin{proof}
By Lemma \ref{seq_lemma}, we have $\phi = \psi$ on $\mathrm{dom}(\psi) \subset G = \mathrm{dom}(\phi).$ Because $\psi \in C_s(G,X),$ we have $\phi = \psi.$
\end{proof}
Further, the next construction by Yorke enables us to avoid the difficulty accompanied with $C_s(G,X)$ \cite{yorke1969spaces}.

\begin{definition}[Star-construction]
For a subset $S \subset C_s(G,X)$ and $W \subset G \times X,$ we define
\[
	S^*W := \{(g,\phi) \mid \phi \in S, g \in \mathrm{dom}\,\phi, (g, \phi(g) )\in W\}.
\]
Because $S^*W \subset G \times C_s(G,X)$, the topology of $S^*W$ is naturally defined by the subspace topology. We call $S^*W$ the \emph{star-construction} defined by $S$ and $W.$
\end{definition}
Basic properties of the star-construction may be summarized as follows:

\begin{lemma}\label{basic_rels}
The following properties hold:
\begin{enumerate}
	\item For $S, S'\subset C_s(G,X)$ and $W, W' \subset G \times X$,$S \subset S'$ and $W\subset W'$ implies $S^*W \subset S'^* W'$ and $S^*(W'\backslash W) = S^*W' \backslash S^*W.$
	\item For $S\subset C_s(G,X)$ and $ W_\mu \subset G \times X$ ($\mu \in M$), we have
\[
	S^*\bigcup_{\mu \in M} W_\mu = \bigcup_{\mu \in M} S^*W_\mu, 
\]
and
\[
	S^*\bigcap_{\mu \in M} W_\mu = \bigcap_{\mu \in M} S^*W_\mu.
\]
	\item For subsets $S_\lambda\subset C_s(G,X)$ ($\lambda \in \Lambda$) and $W \subset G \times X$, we have

\[
	(\bigcup_{\lambda \in \Lambda} S_\lambda)^* W = \bigcup_{\lambda \in \Lambda} S_\lambda^* W, 
\]
and
\[
	(\bigcap_{\lambda \in \Lambda} S_\lambda)^* W = \bigcap_{\lambda \in \Lambda} S_\lambda^* W.
\]
\end{enumerate}
\end{lemma}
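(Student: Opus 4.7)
The plan is to verify all three items by straightforward element-chasing, since every clause reduces to unpacking the defining condition $(g,\phi) \in S^*W \iff \phi \in S,\ g \in \mathrm{dom}\,\phi,\ (g,\phi(g)) \in W$ and observing that this condition factors cleanly into a part depending only on $S$ and a part depending only on $W$.

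For (1), I would take $(g,\phi) \in S^*W$ and note that $\phi \in S \subset S'$ and $(g,\phi(g)) \in W \subset W'$ give $(g,\phi) \in S'^*W'$ immediately. For the difference identity, I would rewrite $(g,\phi) \in S^*(W'\setminus W)$ as the conjunction of $\phi \in S$, $g \in \mathrm{dom}\,\phi$, $(g,\phi(g)) \in W'$, and $(g,\phi(g)) \notin W$; the first three clauses say $(g,\phi) \in S^*W'$, while the last clause together with $\phi \in S$ and $g \in \mathrm{dom}\,\phi$ is equivalent (given the previous three) to $(g,\phi) \notin S^*W$. Both implications are reversible.

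For (2) and (3), I would observe that in each case the membership $(g,\phi) \in S^*W$ is conjunctive and contains $(g,\phi(g)) \in W$ as the only clause involving $W$, while it contains $\phi \in S$ as the only clause involving $S$. Since $\in$ distributes over arbitrary unions and intersections of sets in both coordinates, the four identities follow by quantifier manipulation: for instance, $(g,\phi) \in S^*\bigcap_\mu W_\mu$ iff $\phi \in S$, $g \in \mathrm{dom}\,\phi$, and $(g,\phi(g)) \in W_\mu$ for every $\mu$, which is exactly $(g,\phi) \in \bigcap_\mu S^*W_\mu$, and analogously for unions in (2) and for $S_\lambda$ in (3).

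There is essentially no obstacle — the only minor subtlety is the $\mathrm{dom}\,\phi$-clause, which does not depend on either $S$ or $W$ and therefore passes through every set operation unchanged; this is what makes the distributivity work uniformly in both arguments. The proof is therefore a short bookkeeping argument with no analytic content.
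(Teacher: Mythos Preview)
Your proposal is correct and matches the paper's approach exactly: the paper's proof consists solely of the sentence ``The proofs are straightforward,'' and your element-chasing argument is precisely the straightforward verification the authors have in mind.
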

\begin{proof}
The proofs are straightforward.
\end{proof}

The next theorem enables us to analyze the ``solution space" $S^*W$ in a way similar to that of the usual ordinary differential equations.
\begin{theorem}\label{top_thm}
For all $S \subset C_s(G,X)$ and $W \subset G \times X$, $S^*W$ is second countable and Hausdorff.
\end{theorem}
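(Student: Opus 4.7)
The plan is to treat the two assertions separately. Second countability is a routine subspace/product argument; the Hausdorff claim is the substantive part and will rely essentially on the maximal-definedness of elements of $C_s(G,X)$, since $C_s(G,X)$ itself is known to be non-Hausdorff in general.

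For second countability, I would invoke Lemma \ref{seq_lemma}, which gives second countability of $C_p(G,X)$; since $C_s(G,X)$ is a subspace and $G$ is second countable by hypothesis, the product $G \times C_s(G,X)$ is second countable, and hence so is its subspace $S^*W$.

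For the Hausdorff property, I would fix two distinct points $(g_1, \phi_1), (g_2, \phi_2) \in S^*W$ and split into cases. When $g_1 \neq g_2$, the Hausdorff property of $G$ immediately supplies disjoint open cylinders of the form $V_i \times C_s(G,X)$ whose traces on $S^*W$ separate the two points. The substantive case is $g_1 = g_2 =: g$ with $\phi_1 \neq \phi_2$. Here I would first produce a point $g' \in \mathrm{dom}\, \phi_1 \cap \mathrm{dom}\, \phi_2$ with $\phi_1(g') \neq \phi_2(g')$. The argument is by contradiction: if $\phi_1 \equiv \phi_2$ on the intersection (which is nonempty, since it contains $g$), then glueing the two maps along their common values yields a continuous partial map $\psi$ with domain $\mathrm{dom}\, \phi_1 \cup \mathrm{dom}\, \phi_2$. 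This union is a connected open subset of $G$ because both $\mathrm{dom}\, \phi_i$ are connected open sets sharing the point $g$. Hence $\psi \in C_p(G,X)$ has connected domain extending each $\mathrm{dom}\, \phi_i$ and agrees with $\phi_i$ there, so the maximal-definedness of each $\phi_i$ forces $\phi_1 = \psi = \phi_2$, contradicting $\phi_1 \neq \phi_2$. Given such a $g'$, the metric (hence Hausdorff) space $X$ supplies disjoint opens $U_1, U_2 \subset X$ with $\phi_i(g') \in U_i$, and the subbasic opens $W(\{g'\}, U_i)$ are then disjoint in $C_p(G,X)$, because any common element would send $g'$ into $U_1 \cap U_2 = \emptyset$. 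The sets $G \times W(\{g'\}, U_i)$ intersected with $S^*W$ yield the required disjoint neighborhoods.

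The principal obstacle I anticipate is the connectedness/maximality step that produces the separating parameter $g'$ in the second case; this is where the structure of $C_s(G,X)$ (as opposed to $C_p(G,X)$) is essential. Everything else reduces to standard facts about subspaces and products of second countable or Hausdorff spaces.
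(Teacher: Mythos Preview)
Your proposal is correct and follows essentially the same approach as the paper: second countability via Lemma \ref{seq_lemma} and a product/subspace argument, and the Hausdorff property by separating in $G$ when the first coordinates differ and otherwise using maximality to find a parameter where the two maps disagree, then separating with subbasic opens $W(\{g'\},U_i)$. Your case split (assuming $g_1=g_2$ in the second case, so that the domains automatically intersect) is slightly cleaner than the paper's, and you spell out the gluing argument behind the maximality step, which the paper leaves implicit.
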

\begin{proof}
$G \times C_s(G,X)$ is a second countable space by  Lemma \ref{seq_lemma}. Consequently, $S^*W$ is also second countable. 

For the Hausdorff property, let $(g, \phi), (g', \psi) \in S^*W$ with  $(g, \phi) \neq (g', \psi).$  

If $g \neq g',$ we may find open neighborhoods $U_g, U_{g'} \subset G$ of $g $ and $ g'$ with $U_g \cap U_{g'} = \emptyset.$ Then we have $U_g \times V \cap U_{g'} \times V'= \emptyset$ for all open neighborhoods $V,V'$ of $\phi$ and $\psi.$

If $\phi \neq \psi,$ the maximality of domains implies that $\mathrm{dom}\,\phi \cap \mathrm{dom}\,\psi = \emptyset$ or $\phi(g_0) \neq \psi(g_0)$ for some $g_0 \in \mathrm{dom}\,\phi \cap \mathrm{dom}\,\psi.$ In the former case we have $g \neq g',$ which we have already considered.  In the latter case, there exists $r > 0$ with $B_r(\phi(g_0) )\cap B_r(\psi(g_0)) = \emptyset.$ Because $W \left(\{g_0\}, B_r(\phi(g_0) )\right)$ and  $W(\{g_0\}, B_r\left(\psi(g_0) )\right)$ are disjoint, we have $U \times W \left(\{g_0\}, B_r(\phi(g_0) )\right) \cap U' \times W \left(\{g_0\}, B_r(\psi(g_0) )\right) =\emptyset $ for all neighborhoods $U$ and $U'$ of $g$ and $g'.$
\end{proof}
\begin{remark}\label{ch_star}
The star construction may be characterized as follows. We establish the evaluation map
$\mathrm{ev} :  G\times C_s(G,X)\to G \times \hat X$
by $\mathrm{ev}(g, \phi):= (g, \mu(\phi)(g)),$ and the projection
$\mathrm{p} :  G\times C_s(G,X)\to C_s(G,X)$
by $\mathrm{p}(g, \phi):= \phi.$ Note that these maps are continuous. Then the set $S^* W$ is the largest subset $E$ of $G\times C_s(G,X)$ such that $\mathrm{ev}(E) \subset W $ and $\mathrm{p}(E) \subset S.$  In this sense, the set $S^* W$ characterizes the ``Cauchy problem" on $W$ with the solution set $S$, which amounts to finding a map $\phi \in S$ satisfying $\phi(g) = x$ for each $(g,x) \in W$.
\end{remark}
In the next definition, we list the main additional axioms proposed by Yorke, which are abstractions of the conditions for well-posedness.
\begin{definition}\label{def_props}
Let $S \subset C_s(G,X).$
\begin{enumerate}
	\item The subset $S$ satisfies the \emph{compactness axiom} if $S^* W$ is compact for all compact $W \subset G \times X.$
	\item The subset $S$ satisfies the \emph{existence axiom} on $W$ if $S^*\{(t,x)\}$ is nonempty for all $(t, x) \in W$.
	\item The subset $S$ satisfies the \emph{uniqueness axiom} on $W$ if $S^*\{(t,x)\}$ is empty or a singleton for all $(t, x) \in W$.
	\item The subset $S$ has \emph{domain} $D$ if $D \subset \mathrm{dom}\,\phi$ for all $\phi \in S$.
\end{enumerate}
\end{definition}
\begin{remark}
Because $S^* W$ is second countable, $S$ satisfies the compactness axiom if and only if $S^* W$  is sequentially compact for all compact $W \subset G \times X.$
\end{remark}

The next result, which is an analogue of the classical convergence theorem (Chapter 2, Theorem 3.2 in \cite{hartman2002ordinary}), clarifies that the compactness axiom is an abstraction of the continuous dependence on the initial conditions.
\begin{theorem}\label{kamke}
A subset $S \subset C_s(G,X)$ satisfies the compactness axiom if and only if the following property holds: if $(g_n, x_n) \to (g, x)$ in $G \times X$ and there exists a sequence of maps $\phi_n \in S$ with $\phi_n(g_n) = x_n,$ there is a map $\psi \in S$ with $\psi(g) = x$ and a subsequence $\{(g_{n_i} \phi_{n_i})\}$ with $(g_{n_i}, \phi_{n_i}) \to (g, \psi).$
\end{theorem}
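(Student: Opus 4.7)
The plan is to prove the two implications separately, leveraging the sequential description of the compact-open topology given in Lemma \ref{seq_lemma} together with the fact, noted in the remark preceding the theorem, that the compactness axiom is equivalent to sequential compactness of $S^*W$ for every compact $W\subset G\times X$.

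For the ``only if'' direction, suppose $S$ satisfies the compactness axiom. Given $(g_n,x_n)\to(g,x)$ in $G\times X$ and $\phi_n\in S$ with $\phi_n(g_n)=x_n$, the key observation is that
\[
W:=\{(g_n,x_n)\mid n\in\mathbb{N}\}\cup\{(g,x)\}
\]
is compact in $G\times X$, being a convergent sequence together with its limit. Each $(g_n,\phi_n)$ lies in $S^*W$, so by the compactness axiom a subsequence $(g_{n_i},\phi_{n_i})$ converges to some $(g',\psi)\in S^*W$. Continuity of the first projection forces $g'=g$, whence $\psi\in S$ and $g\in\mathrm{dom}\,\psi$. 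It remains to verify $\psi(g)=x$. Using local compactness of $G$, pick a compact neighborhood $K$ of $g$ with $K\subset\mathrm{dom}\,\psi$; Lemma \ref{seq_lemma} then guarantees $K\subset\mathrm{dom}\,\phi_{n_i}$ for all sufficiently large $i$ and $\phi_{n_i}\to\psi$ uniformly on $K$. Since $g_{n_i}\to g$, eventually $g_{n_i}\in K$, and the estimate
\[
d(x_{n_i},\psi(g))\leq d(\phi_{n_i}(g_{n_i}),\psi(g_{n_i}))+d(\psi(g_{n_i}),\psi(g))
\]
combined with continuity of $\psi$ at $g$ yields $x_{n_i}\to\psi(g)$, so $\psi(g)=x$.

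For the ``if'' direction, assume the stated sequential property and fix a compact $W\subset G\times X$; it suffices to show $S^*W$ is sequentially compact. Given a sequence $(g_n,\phi_n)\in S^*W$, the pairs $(g_n,\phi_n(g_n))$ lie in the compact set $W$, so a subsequence converges to some $(g,x)\in W$. Applying the sequential hypothesis to this subsequence produces a further subsequence $(g_{n_{k_j}},\phi_{n_{k_j}})\to(g,\psi)$ with $\psi\in S$ and $\psi(g)=x$. Since $(g,\psi(g))=(g,x)\in W$, the limit $(g,\psi)$ lies in $S^*W$, completing the argument.

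The main obstacle is the forward direction, specifically transferring compact-open convergence $\phi_{n_i}\to\psi$ to convergence along the moving evaluation points $g_{n_i}\to g$; compact-open convergence only directly gives uniform convergence on \emph{fixed} compacta, so one must exploit local compactness of $G$ to trap the tail of $\{g_{n_i}\}$ inside a common compact neighborhood of $g$ contained in $\mathrm{dom}\,\psi$. Once this is handled, the remaining manipulations of $S^*W$ are essentially bookkeeping.
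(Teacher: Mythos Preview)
Your proof is correct and follows essentially the same route as the paper's: in the forward direction you form the compact set $W=\{(g,x)\}\cup\{(g_n,x_n)\}$ and extract a convergent subsequence in $S^*W$, and in the reverse direction you pass to a convergent subsequence of $(g_n,\phi_n(g_n))$ in $W$ before invoking the hypothesis. Your explicit verification that $\psi(g)=x$ via a compact neighborhood and the triangle inequality is more careful than the paper's treatment, which stops after observing $g'=g$; the same conclusion also follows immediately from the continuity of the evaluation map $\mathrm{ev}$ noted in Remark~\ref{ch_star}.
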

\begin{proof}
Let $S \subset C_p(G,X)$ satisfy the compactness axiom and $(g_n, x_n) \to (g, x)$ be a sequence in $G \times X$ with a corresponding sequence of maps $\phi_n \in S$ satisfying $\phi_n(g_n) = x_n.$ Then $W := \{(g,x) \}\cup\{(g_n, x_n)\mid n \in \mathbb{N} \}$ is compact, and therefore, $S^*W$ is sequentially compact. Because $(g_n, \phi_n) \in S^*W$ for each $n,$ we may take a convergent subsequence $(g_{n_i},\phi_{n_i}) \to (g', \psi) \in S^*W.$ It is clear that $g = g'.$ 

Conversely, let $W \subset G \times X$ compact and $(g_n,\phi_n)$ ($n=1,2,\cdots $) be a sequence in $S^*W.$ Then, $(g_n, \phi_n(g_n)) \in W$ for $n=1,2,\cdots $, and therefore, we can find a convergent subsequence $(g_{n_i}, \phi_{n_i}(g_{n_i})) \to (g, y) \in W$ ($i\to \infty$). From the hypothesis, there exist a subsequence $(g_{n_{i_j}}, \phi_{n_{i_j}})$ and a pair $(g,\psi) \in S^* W $ satisfying $(g_{n_{i_j}}, \phi_{n_{i_j}}) \to (g, \psi)$ as $j \to \infty$ and $\psi(g) = y.$ Therefore, $S^* W$ is sequentially compact. 
\end{proof}
From the classical convergence theorem, we immediately observe that the solution space of an ordinary differential equation satisfies the compactness axiom:
\begin{corollary}
Consider an ordinary differential equation
	\[x' = f(t,x),\]
where $f:\mathbb{R} \times W \to W$ is continuous and $W \subset \mathbb{R}^n$ is a nonempty open set.
	Then the solution space $S$ satisfies the compactness axiom.
\end{corollary}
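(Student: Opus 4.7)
The plan is to verify the sequential criterion supplied by Theorem \ref{kamke}, which reduces the compactness axiom to the classical continuous-dependence statement. In the present setting $G = \mathbb{R}$ and every $\phi \in S$ is a maximally defined solution of $x' = f(t,x)$, so $\mathrm{dom}\,\phi$ is an open interval in $\mathbb{R}$.

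First, I would fix a sequence $(t_n, x_n) \to (t, x)$ in $\mathbb{R} \times W$ and solutions $\phi_n \in S$ with $\phi_n(t_n) = x_n$. I would then invoke the classical Kamke convergence theorem as cited in the excerpt (Chapter 2, Theorem 3.2 in \cite{hartman2002ordinary}): it produces a subsequence $\phi_{n_i}$, a maximally defined solution $\psi : J \to W$ with $t \in J$ and $\psi(t) = x$, and the property that for every compact $K \subset J$ one has $K \subset \mathrm{dom}\,\phi_{n_i}$ for all sufficiently large $i$, together with $\sup_{s \in K} d(\phi_{n_i}(s), \psi(s)) \to 0$.

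Second, I would translate this convergence into the topology of $C_s(\mathbb{R}, W)$. By Lemma \ref{seq_lemma}, the property just recorded is precisely the criterion for $\phi_{n_i} \to \psi$ in $C_p(\mathbb{R}, W)$, and since $C_s(\mathbb{R}, W)$ carries the subspace topology the same convergence holds there. Combined with $t_{n_i} \to t$, this gives $(t_{n_i}, \phi_{n_i}) \to (t, \psi)$ in $\mathbb{R} \times C_s(\mathbb{R}, W)$ with $\psi \in S$ and $\psi(t) = x$, verifying the hypothesis of Theorem \ref{kamke} and hence yielding the compactness axiom.

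The main obstacle, if there is one at all, is purely bookkeeping: one must confirm that the ``uniform convergence on compact subsets of the maximal interval'' delivered by the classical theorem coincides literally with the topology that the star-construction puts on $C_s(\mathbb{R}, W)$. Once Lemma \ref{seq_lemma} is invoked this identification is immediate, and no analytic input beyond the classical Kamke theorem is required.
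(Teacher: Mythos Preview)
Your proposal is correct and follows exactly the approach the paper takes: the corollary is stated immediately after Theorem \ref{kamke} with the remark that it follows from the classical convergence theorem (Hartman, Chapter 2, Theorem 3.2), and your write-up simply spells out how that theorem feeds into the sequential criterion of Theorem \ref{kamke} via Lemma \ref{seq_lemma}. No additional ideas are needed, and your bookkeeping concern is indeed handled by Lemma \ref{seq_lemma}.
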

Further, a topological transformation group can be identified with a function space satisfying the compactness axiom. Details on the topological transformation groups may be found in \cite{bredon1972introduction}.
\begin{corollary}\label{action_set}
	Let $\pi : G \times X \to X$ be a continuous left $G$-action on $X.$ Then the set of maps \[S := \{\pi(\cdot,x) \mid x \in X\}\]
	satisfies the compactness, existence, and uniqueness axioms if $X$ is locally compact.
\end{corollary}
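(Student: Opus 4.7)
The plan is to verify each of the three axioms in turn, exploiting the group structure of $G$ and the joint continuity of $\pi$. Each element of $S$ is a total map $\pi(\cdot, x) : G \to X$, continuous by the continuity of $\pi$ in the first variable, so $S \subset C(G,X) \subset C_s(G,X)$ and every member of $S$ is automatically maximally defined.

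For existence at a point $(g,x) \in G \times X$, I would exhibit the explicit witness $\phi := \pi(\cdot, \pi(g^{-1}, x)) \in S$, for which $\phi(g) = \pi(g g^{-1}, x) = \pi(e, x) = x$ by the defining properties of a left action. For uniqueness, if $\psi_i := \pi(\cdot, y_i) \in S$ ($i = 1,2$) both satisfy $\psi_i(g) = x$, then $\pi(g, y_1) = \pi(g, y_2)$, and acting on both sides by $g^{-1}$ forces $y_1 = y_2$, hence $\psi_1 = \psi_2$.

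For the compactness axiom I would use the sequential characterization supplied by Theorem \ref{kamke}. Suppose $(g_n, x_n) \to (g, x)$ in $G \times X$ and $\phi_n = \pi(\cdot, y_n) \in S$ satisfies $\phi_n(g_n) = x_n$. The uniqueness step forces $y_n = \pi(g_n^{-1}, x_n)$, so by continuity of inversion in $G$ and of $\pi$, one has $y_n \to y := \pi(g^{-1}, x)$. The natural candidate for the limiting map is $\psi := \pi(\cdot, y) \in S$, for which $\psi(g) = x$ just as in the existence step. What remains is to verify that $\phi_n \to \psi$ in $C_s(G,X)$; by Lemma \ref{seq_lemma} this reduces to the assertion $\sup_{t \in K} d(\phi_n(t), \psi(t)) \to 0$ for every compact $K \subset G$.

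The main obstacle is precisely this uniform-convergence step, and I expect to handle it via uniform continuity of $\pi$ on a compact set built from the convergent sequence. The set $Y := \{y_n : n \in \mathbb{N}\} \cup \{y\}$ is compact in $X$ (local compactness of $X$ allowing one to sit $Y$ inside a compact neighborhood as needed), so $K \times Y$ is a compact subset of the metrizable space $G \times X$, and $\pi|_{K \times Y}$ is uniformly continuous. Thus for any $\epsilon > 0$ there is $\delta > 0$ such that $d(y_n, y) < \delta$ already implies $\sup_{t \in K} d(\pi(t, y_n), \pi(t, y)) < \epsilon$, giving $\phi_n \to \psi$ in the topology of compact convergence. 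Since the full sequence converges (no passage to a subsequence is required), the criterion of Theorem \ref{kamke} is met, and compactness follows.
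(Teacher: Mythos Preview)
Your proof is correct and follows essentially the same route as the paper: the existence and uniqueness arguments are identical, and the compactness argument is the paper's verification of the criterion in Theorem \ref{kamke}, reducing to uniform continuity of $\pi$ on a compact product $K \times (\text{compact set containing the } y_n)$. The only cosmetic difference is that the paper uses a relatively compact neighborhood $\bar U$ of $y$ (invoking local compactness of $X$) as the second factor, whereas you use the already-compact set $Y = \{y_n\} \cup \{y\}$; your parenthetical about local compactness is therefore unnecessary at that step, but harmless.
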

\begin{proof}	
The existence and uniqueness axioms are satisfied because we have
\[
	S^*\{(t,x)\} = \left\{\left(t, \pi\left( \cdot, \pi(t^{-1}, x)\right)\right)\right\}
\]
for each $(t,x) \in G\times X$.

For the compactness axiom, we show that $\pi\left( \cdot, \pi(t_n^{-1}, x_n)\right) \to  \pi\left( \cdot, \pi(t^{-1}, x)\right)$ as $n \to \infty$ whenever $(t_n, x_n) \to (t, x)$ as $n \to \infty$. Let $y_n := \pi(t_n^{-1}, x_n)$ and $y := \pi(t^{-1}, x),$ and fix a compact subset $K \subset G.$ We may find a neighborhood $U$ of $y$ with $\bar U$ being compact and $y_n \in U$ for sufficiently large $n$. Because $\pi$ is continuous, $\pi$ is uniformly continuous on $K \times \bar U.$ Therefore, we have $\sup_{s\in K} d(\pi(s, y_n), \pi(s, y)) \to 0$ as $n \to \infty.$
\end{proof}
In some cases, the compactness axiom is not independent with other axioms and certain restrictions are present.
\begin{corollary}
Let subset $S \subset C_s(G,X)$ satisfy the compactness axiom and there exists a subset $W \subset G \times X$ such that $S$ satisfies the existence axiom on $W$. If $X$ is locally compact, then $S$ satisfies the existence axiom on the closure of $W$.
\end{corollary}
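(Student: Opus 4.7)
The goal is to show that for each $(t,x) \in \overline{W}$, the set $S^*\{(t,x)\}$ is nonempty, i.e., there exists $\psi \in S$ with $\psi(t) = x$. The plan is to approximate $(t,x)$ by a sequence in $W$, invoke the existence axiom to produce corresponding maps in $S$, and then extract a limit using Theorem \ref{kamke}.

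First I would fix $(t,x) \in \overline{W}$. Since $G$ is metrizable by the Birkhoff--Kakutani theorem and $X$ is a metric space, the product $G \times X$ is metrizable and in particular first countable, so the closure coincides with the sequential closure. Hence there exists a sequence $(t_n, x_n) \in W$ with $(t_n, x_n) \to (t,x)$ in $G \times X$. By the existence axiom on $W$, for each $n$ there is some $\phi_n \in S$ with $\phi_n(t_n) = x_n$.

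Next I would apply Theorem \ref{kamke}. Since $S$ satisfies the compactness axiom, the theorem yields a map $\psi \in S$ with $\psi(t) = x$ and a subsequence $(t_{n_i}, \phi_{n_i}) \to (t, \psi)$ in $G \times C_s(G,X)$. In particular $(t, \psi) \in S^*\{(t,x)\}$, so this set is nonempty. This establishes the existence axiom on $\overline{W}$.

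The main subtlety I expect is pinpointing where the local compactness of $X$ is actually used, since the argument above appears to go through using only the metrizability of $G \times X$ together with Theorem \ref{kamke}. One possibility is that local compactness is invoked implicitly so that a relatively compact neighborhood of $x$ can be chosen, which in turn would let one pass to a compact $W' \subset G \times X$ containing the convergent sequence $\{(t_n, x_n)\}$ and the limit --- but since such a compact set already exists as $\{(t,x)\} \cup \{(t_n, x_n) \mid n \in \mathbb{N}\}$, local compactness seems to be a convenient but not strictly necessary hypothesis for this particular assertion. I would therefore complete the proof along the lines above and simply note that local compactness ensures the framework is compatible with the setting of Corollary \ref{action_set} and subsequent results.
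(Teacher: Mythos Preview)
Your argument is correct and is precisely the intended one: the paper states this corollary without proof as an immediate consequence of Theorem~\ref{kamke}, and your sequence-plus-subsequence argument is exactly that consequence spelled out. Your observation that local compactness of $X$ is not actually used is also valid; the hypothesis seems to be carried along for consistency with the surrounding results rather than out of necessity for this particular statement.
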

The following theorem is a generalization of a well-known result on the global solution.
\begin{theorem}
Let $S \subset C_s(G,X)$ satisfy the compactness axiom and $\phi \in S.$ If there is a compact set $K$ with $\mathcal{O}(\phi) \subset K,$ $\mathrm{dom}\, \phi$ coincides with a connected component of $G.$ 
\end{theorem}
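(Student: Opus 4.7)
The plan is to argue by contradiction, exploiting the compactness axiom (via Theorem \ref{kamke}) to extend $\phi$ to any boundary point of its domain lying in the same connected component.

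Since $\phi \in C_s(G,X)$ is maximally defined, $\mathrm{dom}\,\phi$ is nonempty, open, and connected, so it lies in a unique connected component $C$ of $G$. Suppose for contradiction that $\mathrm{dom}\,\phi \subsetneq C$. Because $\mathrm{dom}\,\phi$ is a nonempty open subset of the connected space $C$, it cannot also be closed in $C$, so there exists $g_0 \in C \setminus \mathrm{dom}\,\phi$ with $g_0 \in \overline{\mathrm{dom}\,\phi}$. Pick a sequence $g_n \in \mathrm{dom}\,\phi$ with $g_n \to g_0$.

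Next I would use the assumption $\mathcal{O}(\phi) \subset K$: the sequence $\phi(g_n)$ lies in the compact set $K$, so after passing to a subsequence we may assume $\phi(g_n) \to x$ for some $x \in K$. Now take the constant sequence $\phi_n := \phi \in S$, for which $\phi_n(g_n) = \phi(g_n) \to x$ and $(g_n, \phi_n(g_n)) \to (g_0, x)$. Applying Theorem \ref{kamke} (the characterization of the compactness axiom), there is $\psi \in S$ with $\psi(g_0) = x$ and a subsequence $\phi_{n_i} \to \psi$ in $C_s(G,X)$.

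Finally, I would derive the contradiction from Lemma \ref{seq_lemma}. Since $\phi_{n_i} = \phi$ for all $i$ and $\phi_{n_i} \to \psi$ in the topology of compact convergence on $C_s(G,X)$, every compact subset of $\mathrm{dom}\,\psi$ must eventually be contained in $\mathrm{dom}\,\phi_{n_i} = \mathrm{dom}\,\phi$. Applying this to the compact set $\{g_0\} \subset \mathrm{dom}\,\psi$ yields $g_0 \in \mathrm{dom}\,\phi$, contradicting the choice of $g_0$. Hence $\mathrm{dom}\,\phi = C$.

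The main obstacle I anticipate is being careful at the boundary-extraction step: in a general topological group $G$, connected components are closed but possibly nowhere dense, so I must argue that an open proper subset of a connected component necessarily has a limit point inside that component (this uses only the connectedness of $C$, not any metric structure). The rest is a clean application of Theorem \ref{kamke} plus the fact that the limit obtained must extend $\phi$ on its domain, which is exactly what Lemma \ref{seq_lemma} provides.
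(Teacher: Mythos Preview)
Your proposal is correct and follows essentially the same route as the paper: assume the domain is a proper subset of its connected component, pick a boundary point $g_0$ and a sequence $g_n\to g_0$ in $\mathrm{dom}\,\phi$, use compactness of $K$ to get $\phi(g_n)\to x$, apply Theorem~\ref{kamke} to the constant sequence $\phi_n=\phi$, and then use Lemma~\ref{seq_lemma} to conclude $g_0\in\mathrm{dom}\,\phi$. The paper's write-up is terser (it simply asserts $\partial(\mathrm{dom}\,\phi)\neq\emptyset$ and that $g\in\mathrm{dom}\,\phi$ follows from $(g_{n_i},\phi)\to(g,\psi)$), but the underlying argument is identical to yours; your concern about the boundary-extraction step is already handled by the connectedness of $C$ exactly as you describe.
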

\begin{proof}
There is a connected component $G_c$ of $G$ satisfying $\mathrm{dom}\,\phi\subset G_c.$ If $\mathrm{dom}\, \phi \neq G_c,$ $\partial \left( \mathrm{dom}\, \phi \right) \neq \emptyset.$ Then we may take a sequence $g_n \to g \in \partial \left( \mathrm{dom}\, \phi \right)$ with $g_n \in \mathrm{dom}\, \phi.$ Because the image of $\phi$ is contained in $K,$ we may assume $\phi(g_n) \to y \in K.$ By Theorem \ref{kamke}, there exists $(g, \psi)$ with $(g_{n_i}, \phi) \to (g, \psi).$ Therefore, $g \in \mathrm{dom}\, \phi,$ which is a contradiction.
\end{proof}
The next theorem is a generalization of a result announced by Yorke.
\begin{theorem}
If $S \subset C_s(G,X)$ satisfies the compactness axiom and $X$ is locally compact, $S^* W$ is metrizable for each subset $W \subset G \times X$.
\end{theorem}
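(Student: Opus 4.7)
The plan is to reduce the statement to metrizability of the ambient space $S^*(G\times X)$. Since both $S^*W$ and $S^*(G\times X)$ carry the subspace topology inherited from $G\times C_s(G,X)$, and $S^*W\subset S^*(G\times X)$, any metric witnessing metrizability of $S^*(G\times X)$ restricts to $S^*W$. By Theorem \ref{top_thm}, $S^*(G\times X)$ is already second countable and Hausdorff, so by the Urysohn metrization theorem it suffices to establish regularity; I will in fact show the stronger property of local compactness.

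Given $(g_0, \phi_0) \in S^*(G\times X)$, using local compactness of $G$ I would pick a compact neighborhood $K$ of $g_0$ with $K \subset \mathrm{dom}\,\phi_0$, and using local compactness of $X$ a compact neighborhood $L$ of the compact set $\phi_0(K)$, so that $\phi_0(K)\subset \mathrm{int}(L)$. Then $\mathcal{N} := (\mathrm{int}(K) \times W(K, \mathrm{int}(L))) \cap S^*(G\times X)$ is an open neighborhood of $(g_0,\phi_0)$ in $S^*(G\times X)$ that is trapped inside $S^*(K\times L)$: any $(g,\phi)\in\mathcal{N}$ satisfies $g\in K$ and $\phi(g)\in L$, so $(g,\phi(g))\in K\times L$. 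By the compactness axiom, $S^*(K\times L)$ is compact.

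The crux is then to show that the closure of $\mathcal{N}$ in $S^*(G\times X)$ is still compact; the reason we must pass through $S^*(G\times X)$ rather than work inside $S^*W$ directly is that $W$ is arbitrary, so a limit of points of $S^*W$ need not stay in $S^*W$. This reduces to verifying that $S^*(K\times L)$ is closed in $S^*(G\times X)$. For that, I would argue as follows: if $(g_n,\phi_n)\in S^*(K\times L)$ converges in $S^*(G\times X)$ to some $(g,\phi)$, then $g\in\mathrm{dom}\,\phi$ forces $\mathrm{ev}(g,\phi)=(g,\phi(g))$, so by continuity of the evaluation map of Remark \ref{ch_star} we have $(g_n, \phi_n(g_n)) \to (g,\phi(g))$ in $G\times X$; closedness of $K\times L$ yields $(g, \phi(g))\in K\times L$, i.e.\ $(g,\phi)\in S^*(K\times L)$.

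Combining local compactness with the second countable Hausdorff property, Urysohn's metrization theorem produces a compatible metric on $S^*(G\times X)$, which $S^*W$ inherits as a subspace. The main obstacle, as indicated, is the closedness argument for $S^*(K\times L)$ inside $S^*(G\times X)$, since one must carefully use that the limit point is constrained to lie in $S^*(G\times X)$ (ensuring $g\in\mathrm{dom}\,\phi$) in order to apply the continuity of $\mathrm{ev}$ and land back in $X$ rather than at the fictitious point $\omega$.
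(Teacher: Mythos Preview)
Your proof is correct, and the overall strategy---local compactness via the compactness axiom, then Urysohn---matches the paper's. The difference is that you first establish metrizability of the ambient space $S^*(G\times X)$ and then restrict, whereas the paper argues local compactness of $S^*W$ itself. Your reduction is in fact a genuine improvement: for arbitrary $W$ the space $S^*W$ need \emph{not} be locally compact (take $S$ to be the constant maps $\mathbb{R}\to\mathbb{R}$ and $W=\mathbb{R}\times\mathbb{Q}$, so that $S^*W\cong\mathbb{R}\times\mathbb{Q}$), so the paper's intermediate claim is too strong; but metrizability, unlike local compactness, passes to subspaces, and your route exploits exactly this. One small comment: what you call ``the crux''---closedness of $S^*(K\times L)$ in $S^*(G\times X)$---is automatic, since $S^*(K\times L)$ is compact by the compactness axiom and $S^*(G\times X)$ is Hausdorff by Theorem~\ref{top_thm}; the sequential argument through $\mathrm{ev}$ is valid but unnecessary.
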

\begin{proof}
First we show that  $S^* W$ is locally compact. Fix $(g,\phi) \in S^*W.$ Because $G$ and $X$ are locally compact, we may take relatively compact neighborhoods $U$ and $V$ of $g $ and $\phi(g)$ such that $\phi(\bar U) \subset V$. Then $S^*(\bar U \times \bar V) \cap S^*W$ is a compact neighborhood of $(g, \phi)$ in $S^*W.$ This is verified by observing
\[
 (g, \phi) \in U \times W(\bar U , V) \cap S^*W  \subset S^*(\bar U \times \bar V) \cap S^*W.
\] 

Because $S^* W$ is locally compact and Hausdorff, it is regular.  Using the second countability of $S^* W$ (Theorem \ref{top_thm}), we may apply the Urysohn metrization theorem.
\end{proof}
\section{Shift of partial maps and dynamical behavior}
Based on the concepts introduced in the previous section, we consider generalizations of the usual notions in dynamical systems theory. 

The concept of shift invariance plays a central role in the discussion in this section. First we establish that the shift map is a $G$-action.
\begin{theorem}[Shift map]\label{shift_thm}
The shift map $\sigma : G \times C_p(G,X) \to C_p(G,X) $, which is defined by
\[
	\sigma(g, \phi)(x) := \phi(x g)
\]
for $x \in \mathrm{dom}(\phi)g^{-1}$, is continuous and satisfies the following conditions:
\begin{enumerate}
	\item For each $\phi\in C_p(G,X)$ we have $\sigma(e, \phi) = \phi.$
	\item For all $g,h \in G$ and $\phi\in C_p(G,X)$, we have $\sigma(g,\sigma(h,\phi)) = \sigma(gh, \phi).$
\end{enumerate}
That is, $\sigma$ is a left $G$-action.
\end{theorem}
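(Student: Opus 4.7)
The plan is first to dispose of the algebraic axioms by direct calculation, and then to establish continuity via the sequential characterization of the compact-open topology given by Lemma \ref{seq_lemma}, combining local compactness of $G$, the tube lemma, and uniform continuity.

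Axiom (1) is immediate: $\sigma(e,\phi)(x) = \phi(xe) = \phi(x)$ on $\mathrm{dom}(\phi)e^{-1} = \mathrm{dom}(\phi)$. Axiom (2) follows because both $\sigma(g,\sigma(h,\phi))$ and $\sigma(gh,\phi)$ have domain $\mathrm{dom}(\phi)(gh)^{-1}$ and satisfy $\sigma(g,\sigma(h,\phi))(x) = \sigma(h,\phi)(xg) = \phi(xgh) = \sigma(gh,\phi)(x)$.

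For continuity at a point $(g,\phi)$, I would verify sequential continuity, which suffices because $G \times C_p(G,X)$ is second countable by Lemma \ref{seq_lemma}. Suppose $(g_n,\phi_n) \to (g,\phi)$. By Lemma \ref{seq_lemma}, the goal reduces to showing that for every compact $K \subset \mathrm{dom}(\phi)g^{-1}$, one has $K \subset \mathrm{dom}(\phi_n)g_n^{-1}$ for large $n$ and $\sup_{x \in K} d(\phi_n(xg_n), \phi(xg)) \to 0$. To handle the domains, I would apply the tube lemma to the continuous multiplication $K \times G \to G$: since $K \times \{g\}$ maps into the open set $\mathrm{dom}(\phi)$, there is an open neighborhood $V$ of $g$ with $KV \subset \mathrm{dom}(\phi)$. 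Local compactness of $G$ then supplies a compact neighborhood $N \subset V$ of $g$, so $KN \subset \mathrm{dom}(\phi)$ is compact. Applying Lemma \ref{seq_lemma} to $KN$ gives $KN \subset \mathrm{dom}(\phi_n)$ and $\sup_{y \in KN} d(\phi_n(y), \phi(y)) \to 0$ for large $n$; since eventually $g_n \in N$, this yields $Kg_n \subset KN \subset \mathrm{dom}(\phi_n)$.

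For the uniform estimate, I would split
\[
d(\phi_n(xg_n), \phi(xg)) \leq d(\phi_n(xg_n), \phi(xg_n)) + d(\phi(xg_n), \phi(xg)).
\]
The first term is dominated by $\sup_{y \in KN} d(\phi_n(y), \phi(y)) \to 0$. For the second, $\phi$ is uniformly continuous on the compact set $KN$ (using metrizability of $G$ via Birkhoff--Kakutani), and a short argument by contradiction using compactness of $K$ and continuity of multiplication yields $\sup_{x \in K} d_G(xg_n, xg) \to 0$; uniform continuity of $\phi|_{KN}$ then forces the second term to $0$ uniformly in $x$.

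The main obstacle is the bookkeeping of shifting domains: one must construct a compact ``buffer'' $KN \subset \mathrm{dom}(\phi)$ that simultaneously accommodates $Kg_n$ for all large $n$ and serves as a set of uniform convergence for $\phi_n \to \phi$. This requires both the tube lemma and local compactness of $G$. Once this buffer is in place, the two-term estimate above is routine.
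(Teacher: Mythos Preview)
Your proof is correct, but the paper takes a different and shorter route for continuity. Instead of passing to sequences, the paper works directly with the subbasis of the compact-open topology: it fixes a subbasic open set $W(K,V)$ and shows that $\sigma^{-1}(W(K,V))$ is open by exhibiting, for each $(g,\phi)$ in the preimage, an open box $U\times W(K\bar U,V)$ around it contained in the preimage, where $U$ is a relatively compact neighborhood of $g$ with $K\bar U\subset\mathrm{dom}(\phi)\cap\phi^{-1}(V)$. This step uses the same tube-lemma/local-compactness idea that you isolate as the ``buffer'' $KN$, but the paper never needs metrizability of $G$, uniform continuity, the two-term splitting, or the sequential characterization in Lemma~\ref{seq_lemma}. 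Your approach has the virtue of being entirely metric and hence very concrete, and it makes explicit why the moving domains cause no trouble; the paper's approach is more economical and stays at the level of the compact-open subbasis, which is perhaps more natural given that this is how the topology on $C_p(G,X)$ was defined in the first place. The algebraic verifications (1) and (2) are handled identically in both.
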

\begin{proof}
To show that $\sigma$ is continuous, it suffices to confirm that $\sigma^{-1}\left( W(K,V)\right)$ is open for each compact $K \subset G$ and open $V \subset X.$ Let $(g, \phi) \in \sigma^{-1}\left( W(K,V)\right).$ This is equivalent to
\[	
	\sigma(g,\phi)(K) = \phi\left(K g\right) \subset V.
\]
Therefore, we have $\phi \in W\left(K g, V\right).$ We may find an open neighborhood $U$ of $g$  
such that $\bar U$ is compact and
\[
	K \bar U \subset \mathrm{dom}(\phi) \cap \phi^{-1}(V),
\]
using the regularity of $G$. Then we have
\[
	(g, \phi) \in U \times W\left(K \bar U, V \right) \subset \sigma^{-1}\left( W(K,V)\right).
\]
Thus, $\sigma^{-1}\left( W(K,V)\right)$ is open.

The other assertions are proven via direct calculations.
\end{proof}
\begin{remark}
By the definition of the shift map, we have
\[
	\mathrm{dom}\left( \sigma(g, \phi)\right) = \mathrm{dom}\left(\phi\right)g^{-1}
\]
for each $(g,\phi) \in G \times C_p(G,X).$ Further, the following identity holds for all left $G$-action $\pi : G\times X \to X:$
\begin{equation}\label{id_action}
	\sigma\left(g, \pi(\cdot, x) \right) = \pi\left(\cdot,  \pi(g, x)\right).
\end{equation}
\end{remark}

Thus, the triplet $(G, \sigma, C_p(G,X))$ is a topological transformation group. 

As the name ``shift system" usually refers to discrete systems, here we call it the \emph{Bebutov system} on $C_p(G,X)$.

Now we clarify the relationship between the Bebutov system and the usual topological transformation groups.
This is given by the next theorem, which is a generalization of a result by Yorke (Theorem 2.3 in \cite{yorke1969spaces}). 
\begin{theorem}\label{rep_flow}
Let $X$ be locally compact. Then a $\sigma$-invariant subset $S \subset C_s(G,X)$ satisfies the compactness, existence, and uniqueness axioms and has domain $G$ if and only if it is given by a left $G$-action $\pi_S: G\times X \to X$ on $X$ via 
\begin{equation}
S := \{\pi_S(\cdot,x) \mid x \in X\}.
\end{equation}
\end{theorem}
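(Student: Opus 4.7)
The statement is a biconditional, so I plan to handle the two directions separately, with the ``if'' direction being essentially a citation to previous results and the ``only if'' direction containing the real content.

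For the ``if'' direction, suppose $S = \{\pi(\cdot, x) \mid x \in X\}$ for some continuous left $G$-action $\pi$. Corollary~\ref{action_set} gives the compactness, existence, and uniqueness axioms (since $X$ is locally compact), and domain $G$ is immediate since $\pi(\cdot, x)$ is defined on all of $G$. For $\sigma$-invariance, the identity \eqref{id_action} gives $\sigma(g, \pi(\cdot, x)) = \pi(\cdot, \pi(g,x)) \in S$.

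For the ``only if'' direction, the plan is to reconstruct the action from $S$. By the existence and uniqueness axioms and the domain condition, for every $x \in X$ there is a unique $\phi_x \in S$ with $\phi_x(e) = x$ and $\mathrm{dom}\,\phi_x = G$; define $\pi_S(g, x) := \phi_x(g)$. The group axioms are then straightforward: $\pi_S(e, x) = x$ by construction, and to check $\pi_S(gh, x) = \pi_S(g, \pi_S(h, x))$ one uses $\sigma$-invariance, which forces $\sigma(h, \phi_x) \in S$; since $\sigma(h, \phi_x)(e) = \phi_x(h) = \pi_S(h, x)$, uniqueness gives $\sigma(h, \phi_x) = \phi_{\pi_S(h,x)}$, and evaluating both sides at $g$ yields the desired identity. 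Finally one must verify $S = \{\pi_S(\cdot, x) \mid x \in X\}$, which follows from the definition and uniqueness.

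The principal obstacle, and the only step that really uses the compactness axiom, is the continuity of $\pi_S$. Suppose $(g_n, x_n) \to (g, x)$ in $G \times X$; I want $\phi_{x_n}(g_n) \to \phi_x(g)$. Since $\phi_{x_n}(e) = x_n \to x$, Theorem~\ref{kamke} produces a subsequence and some $\psi \in S$ with $\psi(e) = x$ and $\phi_{x_{n_i}} \to \psi$ in $C_s(G,X)$; by uniqueness $\psi = \phi_x$. The standard subsequence-of-subsequence argument then promotes this to $\phi_{x_n} \to \phi_x$ for the full sequence (any subsequence has a further subsequence converging to the same limit $\phi_x$). Applying Lemma~\ref{seq_lemma} on a compact neighborhood $K$ of $g$ contained in $\mathrm{dom}\,\phi_x = G$, one gets $\sup_{t \in K} d(\phi_{x_n}(t), \phi_x(t)) \to 0$; combined with continuity of $\phi_x$ and the triangle inequality, this gives $\phi_{x_n}(g_n) \to \phi_x(g)$, i.e.\ $\pi_S(g_n, x_n) \to \pi_S(g, x)$. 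This is the only nontrivial place; the rest of the argument is bookkeeping on the axioms.
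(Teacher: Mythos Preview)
Your proposal is correct and follows essentially the same approach as the paper. The only minor variation is in the final continuity step: the paper invokes the continuity of the shift map (Theorem~\ref{shift_thm}) to pass from $\phi_{x_n}\to\phi_x$ to $\sigma(g_n,\phi_{x_n})\to\sigma(g,\phi_x)$ and then evaluates at $e$, whereas you argue directly via compact convergence on a compact neighborhood of $g$ and the triangle inequality; both routes are valid and equally short.
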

\begin{proof}
The sufficiency is a consequence of Corollary \ref{action_set} and the identity (\ref{id_action}). 

Let a $\sigma$-invariant subset $S \subset C_s(G,X)$ satisfy the compactness, existence, and uniqueness axioms and have domain $G$. We establish a map $\pi_S: G \times X \to X$ by 
\[
	\pi_S(g,x) : = \phi_{(e,x)} (g),
\]
where $\phi_{(e,x)} $ is the unique element in $S$ with $\phi_{(e,x)}(e) = x.$ We note that
$\sigma \left(g, \phi_{(e,x)} \right) \in S$ and 
\[\sigma \left(g, \phi_{(e,x)} \right)(e) = \phi_{(e,x)} (g) = \pi_S(g,x).\]
Therefore, we have
\begin{equation}\label{conj_eq}
	\phi_{\left(e, \pi_S(g,x)\right)}  = \sigma \left(g, \phi_{(e,x)} \right).
\end{equation}
From this identity, we obtain
\[
\begin{aligned}
	\pi_S\left(g, \pi_S(h,x) \right) &= \phi_{\left(e, \pi_S(h,x)\right)}(g)\\
							&= \sigma \left(h, \phi_{(e,x)} \right)(g)\\
							&=  \sigma \left( g, \sigma \left(h, \phi_{(e,x)} \right) \right)(e)\\
							&= \sigma \left( gh, \phi_{(e,x)} \right)(e)\\
							&= \phi_{(e,x)}(gh) \\
							&= \pi_S(gh,x).
	\end{aligned}
\]
Finally, we show that $\pi_S$ is continuous. Let $(g_n, x_n) \to (g,x)$ as $n \to \infty$ in $G \times X.$ From Theorem \ref{kamke} and the uniqueness, we have $\phi_{(e,x_n)} \to \phi_{(e,x)}$ as $n \to \infty$ in $C_s(G,X).$ By Theorem \ref{shift_thm}, we obtain
\[
	\sigma\left(g_n, \phi_{(e,x_n)}\right) \to \sigma\left(g, \phi_{(e,x)}\right)
\]
in $C_s(G,X).$ Evaluation at $e$ gives us $\pi_S(g_n,x_n) \to \pi_S(g,x) $ in $X$.
\end{proof}
\begin{corollary}
Let $X$ be locally compact. If a $\sigma$-invariant subset $S \subset C_s(G,X)$ satisfies the compactness, existence, and uniqueness axioms and has domain $G$, $S$ is homeomorphic to $X$. Furthermore, $(S, \sigma)$ and $(X, \pi_S)$ are isomorphic as topological transformation groups.
\end{corollary}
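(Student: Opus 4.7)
The plan is to exhibit the homeomorphism explicitly and then read off equivariance from identities already established in the proof of Theorem \ref{rep_flow}. Define $\Phi : X \to S$ by $\Phi(x) := \phi_{(e,x)} = \pi_S(\cdot, x)$, where $\phi_{(e,x)}$ is the unique element of $S$ sending $e$ to $x$ (which exists by the existence and uniqueness axioms together with the fact that $S$ has domain $G$). Surjectivity of $\Phi$ is immediate from the representation $S = \{\pi_S(\cdot,x) \mid x\in X\}$ given by Theorem \ref{rep_flow}, and injectivity follows by evaluating $\pi_S(\cdot, x) = \pi_S(\cdot, y)$ at $e$ to obtain $x = y$.

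Next I would check that $\Phi$ is a homeomorphism. Continuity of $\Phi$ was essentially established in the last paragraph of the proof of Theorem \ref{rep_flow}: if $x_n \to x$ in $X$, then by Theorem \ref{kamke} and the uniqueness axiom one extracts $\phi_{(e,x_n)} \to \phi_{(e,x)}$ in $C_s(G,X)$, so $\Phi(x_n) \to \Phi(x)$ in $S$. For the inverse, note that $\Phi^{-1} : S \to X$ is the evaluation $\phi \mapsto \phi(e)$; since every $\phi \in S$ has domain $G$, the singleton $\{e\}$ is a compact subset of every domain, and Lemma \ref{seq_lemma} gives $\phi_n(e) \to \phi(e)$ whenever $\phi_n \to \phi$ in $S$. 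Thus $\Phi^{-1}$ is continuous, and $S$ is homeomorphic to $X$.

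Finally, equivariance is precisely identity (\ref{conj_eq}) from the proof of Theorem \ref{rep_flow}: for all $g \in G$ and $x \in X$,
\[
    \Phi(\pi_S(g,x)) = \phi_{(e, \pi_S(g,x))} = \sigma(g, \phi_{(e,x)}) = \sigma(g, \Phi(x)),
\]
so $\Phi$ intertwines the action $\pi_S$ on $X$ with the shift action $\sigma$ on $S$. Combined with the homeomorphism property above, this shows that $(S,\sigma)$ and $(X,\pi_S)$ are isomorphic as topological transformation groups.

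No step poses a serious obstacle, since Theorem \ref{rep_flow} has already done the heavy lifting; the only point that requires a moment's care is the continuity of $\Phi^{-1}$, where one must use that all maps in $S$ share the full domain $G$ so that evaluation at $e$ is a well-defined continuous map on $S$.
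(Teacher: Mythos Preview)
Your proposal is correct and follows essentially the same approach as the paper: the paper defines $p:S\to X$, $p(\phi)=\phi(e)$ and $s:X\to S$, $s(x)=\phi_{(e,x)}$ (your $\Phi^{-1}$ and $\Phi$, respectively), checks continuity of $p$ via Lemma~\ref{seq_lemma} and of $s$ via the identification $s(x)=\pi_S(\cdot,x)$, and reads equivariance off identity~(\ref{conj_eq}). The only cosmetic difference is that the paper verifies bijectivity by writing $p\circ s=\mathrm{id}_X$ and $s\circ p=\mathrm{id}_S$ directly, whereas you argue injectivity and surjectivity separately.
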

\begin{proof}
Let us consider the map $p:S \to X$ defined by
\[
	p(\phi) = \phi(e)
\]
and $s: X \to S$ defined by
\[
	s(x) = \phi_{(e,x)}.
\]
The continuity of $p$ follows from Lemma \ref{seq_lemma} and that of $s$ from the identity $s(x) = \pi_S(\cdot, x).$ Clearly, $p\circ s = \mathrm{id}_X$ and $s \circ p = \mathrm{id}_S.$

The identity (\ref{conj_eq}) can be rewritten as
\[
	s(\pi_S(g,x)) =  \sigma \left(g, s(x) \right).
\]
Further, the following identity holds:
\[
	 p\left(\sigma \left(g, \phi \right)\right) =  \pi_S(g, p(\phi)).
\]
Hence, $(S, \sigma)$ and $(X, \pi_S)$ are isomorphic as topological transformation groups.
\end{proof}

Thus, each topological transformation groups on $X$ can be interpreted as a $\sigma$-invariant subset of $C_s(G,X),$ and its asymptotic behavior can be stated in terms of the functional space $C_s(G,X).$

Notions regarding the dynamical behavior can be generalized to a $\sigma$-invariant subset of $C_s(G,X).$ First we note the following property of the ``initial value problem," which simplifies the definitions. 
\begin{lemma}\label{lem_iv_ch}
Let $S$ be a $\sigma$-invariant subset of $C_s(G,X)$ and $x \in X.$ If $x = \phi(g)$ for some $\phi \in S$ and $g \in \mathrm{dom}(\phi),$ there exists $\hat \phi \in S$ with $\hat\phi(e) = x.$
\end{lemma}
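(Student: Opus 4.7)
The approach is direct and exploits the $\sigma$-invariance hypothesis: I would use the shift map of Theorem \ref{shift_thm} to translate $\phi$ so that the identity element $e$ plays the role formerly played by $g$.

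Concretely, first I would set $\hat\phi := \sigma(g, \phi)$. By the $\sigma$-invariance of $S$, we immediately get $\hat\phi \in S$, so in particular $\hat\phi \in C_s(G, X)$ without any extra work needed to check maximality of the domain.

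Next, I would check the value at $e$. By the definition of $\sigma$ given in Theorem \ref{shift_thm}, one has $\mathrm{dom}(\hat\phi) = \mathrm{dom}(\phi)\, g^{-1}$, so since $g \in \mathrm{dom}(\phi)$ we have $e = g g^{-1} \in \mathrm{dom}(\hat\phi)$. Then a one-line computation
\[
    \hat\phi(e) = \sigma(g, \phi)(e) = \phi(e \cdot g) = \phi(g) = x
\]
finishes the proof.

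There is essentially no obstacle here — the statement is really just the observation that, for a $\sigma$-invariant family, the ``initial value problem'' posed at an arbitrary $(g, x)$ can always be reduced to one posed at $(e, x)$, and the shift map provides the explicit reduction. The only subtlety worth flagging is that maximal-defined\-ness of $\hat\phi$ is not something we must verify by hand: it is delivered for free by the hypothesis $\sigma(g, \phi) \in S \subset C_s(G, X)$.
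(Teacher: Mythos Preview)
Your proof is correct and takes exactly the same approach as the paper: set $\hat\phi = \sigma(g,\phi)$ and use $\sigma$-invariance. The paper's proof is the single line ``Set $\hat\phi = \sigma(g,\phi) \in S$,'' so your version is simply a more fully spelled-out rendition of the same argument.
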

\begin{proof}
Set $\hat \phi  = \sigma(g, \phi) \in S.$
\end{proof}
\begin{definition}[Weak invariance, equilibrium point]
For a $\sigma$-invariant subset $S$ of $C_s(G,X),$ we define the following:
\begin{enumerate}
	\item A subset $A \subset X$ is \emph{weakly invariant} with respect to $S$ if, for each $x \in A,$ there exists $(e, \phi) \in S^*\{(e,x)\}$ with $\mathcal{O}(\phi) \subset A.$
	\item A point $x \in X$ is an \emph{equilibrium} of $S$ if there exists $(e, \phi) \in S^*\{(e,x)\}$ with $\mathcal{O}(\phi) = \{x\}.$
%
	\end{enumerate}

\end{definition}
The following are some generalizations of well-known properties of these notions.
\begin{lemma}\label{orb_inv_lem}
Let $S$  be a  $\sigma$-invariant subset of $C_s(G,X)$. For each $\phi \in S,$ $\mathcal{O}(\phi)$ is weakly invariant.
\end{lemma}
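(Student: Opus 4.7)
The plan is to exhibit, for each $x \in \mathcal{O}(\phi)$, an explicit $\hat\phi \in S$ with $\hat\phi(e) = x$ and $\mathcal{O}(\hat\phi) \subset \mathcal{O}(\phi)$, using a translate of $\phi$ itself via the shift action $\sigma$. The $\sigma$-invariance of $S$ is what makes this translate land back in $S$, and Lemma \ref{lem_iv_ch} is precisely the formal statement that this construction works.

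First I would fix $x \in \mathcal{O}(\phi)$, so that $x = \phi(g)$ for some $g \in \mathrm{dom}(\phi)$. Define $\hat\phi := \sigma(g, \phi)$; since $S$ is $\sigma$-invariant, $\hat\phi \in S$. Using the defining formula $\sigma(g,\phi)(h) = \phi(hg)$ and the identity $\mathrm{dom}(\sigma(g,\phi)) = \mathrm{dom}(\phi)g^{-1}$ from the remark following Theorem \ref{shift_thm}, we get $\hat\phi(e) = \phi(g) = x$, so $e \in \mathrm{dom}(\hat\phi)$ and $(e, \hat\phi) \in S^{*}\{(e,x)\}$.

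Next I would compute the orbit:
\[
\mathcal{O}(\hat\phi) = \{\hat\phi(h) \mid h \in \mathrm{dom}(\phi)g^{-1}\} = \{\phi(hg) \mid h \in \mathrm{dom}(\phi)g^{-1}\} = \{\phi(h') \mid h' \in \mathrm{dom}(\phi)\} = \mathcal{O}(\phi),
\]
where the second-to-last equality uses the bijection $h \mapsto hg$ between $\mathrm{dom}(\phi)g^{-1}$ and $\mathrm{dom}(\phi)$. In particular $\mathcal{O}(\hat\phi) \subset \mathcal{O}(\phi)$, and combined with the previous step this verifies the defining condition of weak invariance for the point $x$.

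There is no real obstacle here: the argument is essentially just unwinding the definition of weak invariance and recognizing that Lemma \ref{lem_iv_ch} (applied with $A = \mathcal{O}(\phi)$) supplies exactly the needed $\hat\phi$. The only thing to be slightly careful about is bookkeeping with the domain $\mathrm{dom}(\phi)g^{-1}$ and confirming that the orbit is invariant (in fact preserved) under the shift, which is immediate from the change of variables $h' = hg$.
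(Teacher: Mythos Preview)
Your proposal is correct and matches the paper's proof essentially line for line: both pick $g$ with $\phi(g)=x$, set $\hat\phi=\sigma(g,\phi)\in S$, observe $\hat\phi(e)=x$, and note $\mathcal{O}(\hat\phi)=\mathcal{O}(\phi)$. You simply spell out the domain bookkeeping and the change of variables $h'=hg$ that the paper leaves implicit.
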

\begin{proof}
Let $x \in \mathcal{O}(\phi).$ By definition, there exists $g \in \mathrm{dom}(\phi)$ with $\phi(g) =x.$ Then we have $(e, \sigma(g, \phi)) \in S^*\{(e,x)\}.$ Further, $\mathcal{O}(\sigma(g, \phi)) = \mathcal{O}(\phi).$ Hence, $\mathcal{O}(\phi)$ is weakly invariant.
\end{proof}
\begin{lemma}\label{cl_inv_lem}
Let $S$  be a  $\sigma$-invariant subset of $C_s(G,X)$ satisfying the compactness axiom and $X$ be locally compact. If $A \subset X$ is compact and weakly invariant with respect to $S$, $\bar A$ is also weakly invariant.
\end{lemma}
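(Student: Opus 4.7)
The plan is to approximate a point $x \in \bar A$ by points of $A$, lift this approximation to a sequence of witnessing partial maps via weak invariance of $A$, and then pass to a limit using the compactness axiom (via Theorem \ref{kamke}) together with the compact-convergence characterization of convergence in $C_s(G,X)$ (Lemma \ref{seq_lemma}).

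First I would fix $x \in \bar A$, choose a sequence $x_n \in A$ with $x_n \to x$ in $X$, and apply the weak invariance of $A$ at each $x_n$ to obtain $\phi_n \in S$ with $\phi_n(e) = x_n$ and $\mathcal{O}(\phi_n) \subset A$. Since $(e, x_n) \to (e, x)$ in $G \times X$ and $\phi_n(e) = x_n$, Theorem \ref{kamke} provides a subsequence $\{\phi_{n_i}\}$ and a partial map $\psi \in S$ with $\psi(e) = x$ such that $(e, \phi_{n_i}) \to (e, \psi)$ in $G \times C_s(G,X)$; in particular $\phi_{n_i} \to \psi$ in $C_s(G,X)$.

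Next I would verify that $\mathcal{O}(\psi) \subset \bar A$. For any $g \in \mathrm{dom}(\psi)$, Lemma \ref{seq_lemma} applied to the compact set $\{g\} \subset \mathrm{dom}(\psi)$ yields $g \in \mathrm{dom}(\phi_{n_i})$ for sufficiently large $i$ and $\phi_{n_i}(g) \to \psi(g)$. Each $\phi_{n_i}(g)$ lies in $\mathcal{O}(\phi_{n_i}) \subset A$, and since $\bar A$ is closed in the metric space $X$, the limit $\psi(g)$ lies in $\bar A$. Therefore $(e, \psi) \in S^*\{(e,x)\}$ with $\mathcal{O}(\psi) \subset \bar A$, establishing the weak invariance of $\bar A$ at $x$.

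The main subtlety to handle carefully is the potential mismatch between $\mathrm{dom}(\phi_n)$ and $\mathrm{dom}(\psi)$: one cannot simply speak of pointwise convergence on all of $G$, so the compact-convergence formulation in Lemma \ref{seq_lemma} is essential in order to transfer the limit along $\mathrm{dom}(\psi)$. Compactness of $A$ ensures that the sequence $\{x_n\}$ remains in a compact set and that $\bar A$ is closed, while local compactness of $X$ does not seem to enter this particular argument but is presumably stated for uniformity with surrounding results.
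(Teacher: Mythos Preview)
Your argument is correct and follows essentially the same route as the paper: approximate $x\in\bar A$ by $x_n\in A$, use weak invariance to get witnesses $\phi_n$, extract a convergent subsequence via the compactness axiom, and use compact convergence to conclude $\mathcal{O}(\psi)\subset\bar A$. The only cosmetic difference is in how the compactness axiom is invoked: the paper applies it directly to the compact set $\{e\}\times\bar U$, where $U$ is a relatively compact neighborhood of $x$ obtained from local compactness of $X$, whereas you invoke Theorem~\ref{kamke}. Your observation that local compactness is not needed along the Theorem~\ref{kamke} route is therefore correct; the paper's proof, by contrast, does use it in choosing $U$.
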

\begin{proof}
Let $x \in \bar A.$ Then we may take a sequence $\{x_n \} \subset A$ with $x_n \to x$ as $n \to \infty.$ Because $A$ is weakly invariant, we may take, for each $n$, $\phi_n \in S$ with $\phi_n(e) = x_n$ and $\mathcal{O}(\phi_n) \subset A.$ Let $U$ be a relatively compact open neighborhood of $x$. For sufficiently large $n$, we have
\[
	(e, \phi_n) \in S^*(\{e\}\times \bar U).
\]
By the compactness of $S$, we may assume that there exists $(e, \phi) \in S^*(\{e\}\times \bar U)$ with $(e, \phi_n) \to (e, \phi)$ as $n \to \infty.$ Therefore, $\phi(e) = \lim_{n \to \infty} \phi_n(e) = x.$ Further, for each $g \in \mathrm{dom}(\phi),$ we have 
\[
	\phi(g) = \lim_{n\to \infty} \phi_n(g). 
\]
Because $\phi_n(g) \in A,$ $\phi(g) \in \bar A.$
\end{proof}

Note that an intersection of weakly invariant sets need not be weakly invariant due to the lack of uniqueness.

%
%

\section{Morphisms of the star-construction and topological equivalence}
As mentioned in Remark \ref{ch_star}, a star-construction can be characterized in terms of two maps
$\mathrm{ev}:G \times C_s(G,X) \to G \times X$ and $\mathrm{p}: G \times C_s(G,X) \to C_s(G,X) $. Morphisms of the star-construction can be defined so that they respect this structure.
\begin{definition}[Morphisms of the star-construction]
Let $S \subset C_s(G,X)$,$S' \subset C_s(G',X'),$ $W \subset G \times X$ and $W' \subset G' \times X'$.  A \emph{morphism} between the star-constructions $S^*W$ and $(S')^* W'$ is a triplet of continuous maps
\[
	\begin{aligned}
		&H : S^*W \to (S')^* W'\\
		&k: W \to W'\\
		&\eta: S \to S' 
	\end{aligned}
\]
such that 
\[
	\begin{aligned}
		\mathrm{ev} \circ H &=  k \circ \mathrm{ev},\\
		\mathrm{p} \circ H &=  \eta \circ \mathrm{p}.
	\end{aligned}
\]
We denote a morphism by $<H, k, \eta>: S^*W \to (S')^* W'.$

If there exists a morphism $<H, k, \eta>: S^*W \to (S')^* W'$ such that $H,k$ and $\eta$ are homeomorphisms, then $<H, k, \eta>$ is an \emph{isomorphism} and $S^*W$ and $(S')^* W'$ are \emph{isomorphic}.
\end{definition}

The above definition of isomorphism is justified by the following lemma.
\begin{lemma}
Let $S \subset C_s(G,X)$, $S' \subset C_s(G',X'),$ $S'' \subset C_s(G'',X''),$ $W \subset G \times X, $ $W' \subset G' \times X'$ and $W'' \subset G'' \times X''$. If $<H, k, \eta>: S^*W \to (S')^* W'$ and $<H', k', \eta'>: (S')^*W' \to (S'')^* W''$ are morphisms, then $<H' \circ H, k' \circ k, \eta' \circ \eta>: S^*W \to (S'')^* W''$ is also a morphism.

If $H, k, \eta$ are homeomorphisms, then $<H^{-1}, k^{-1}, \eta^{-1}>: (S')^* W' \to S^*W $ is also a morphism.
\end{lemma}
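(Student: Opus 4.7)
The plan is to treat this as a routine diagram chase, since both assertions reduce to manipulating the two defining commutativity relations of a morphism, namely $\mathrm{ev}\circ H = k\circ \mathrm{ev}$ and $\mathrm{p}\circ H = \eta \circ \mathrm{p}$ (and the analogous pair for the primed morphism).

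For the composition statement, I would first note that $H'\circ H$, $k'\circ k$, and $\eta'\circ \eta$ are continuous as compositions of continuous maps, and that their domains and codomains match the required pattern. The two commutativity conditions are then verified by direct substitution:
\[
\mathrm{ev}\circ (H'\circ H) = (\mathrm{ev}\circ H')\circ H = (k'\circ \mathrm{ev})\circ H = k'\circ (k\circ \mathrm{ev}) = (k'\circ k)\circ \mathrm{ev},
\]
and similarly
\[
\mathrm{p}\circ (H'\circ H) = (\eta'\circ \mathrm{p})\circ H = \eta'\circ (\eta\circ \mathrm{p}) = (\eta'\circ \eta)\circ \mathrm{p}.
\]

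For the inverse statement, I would use the hypothesis that $H$, $k$, $\eta$ are all homeomorphisms, hence in particular bijections with continuous inverses, to invert both sides of the defining equations. Starting from $\mathrm{ev}\circ H = k\circ \mathrm{ev}$, composing on the left with $k^{-1}$ and on the right with $H^{-1}$ yields $k^{-1}\circ \mathrm{ev} = \mathrm{ev}\circ H^{-1}$ as maps on $(S')^*W'$. The same manipulation applied to $\mathrm{p}\circ H = \eta\circ \mathrm{p}$ gives $\eta^{-1}\circ \mathrm{p} = \mathrm{p}\circ H^{-1}$. Continuity of $H^{-1}$, $k^{-1}$, $\eta^{-1}$ follows from the homeomorphism assumption.

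I do not expect any genuine obstacle here. The only minor subtlety worth acknowledging is that $\mathrm{ev}$ and $\mathrm{p}$ on the two sides are technically different restrictions of the global maps defined in Remark \ref{ch_star}, but the commutativity still holds on the relevant subspaces $S^*W$, $(S')^*W'$, and $(S'')^*W''$ because $H$, $H'$ map into the correct codomains and the original equations were posed on those domains. Once this is noted, the proof is essentially two lines per statement.
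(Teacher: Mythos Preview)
Your proof is correct; the paper itself omits the proof of this lemma entirely, treating it as routine. Your diagram-chase verification is exactly the kind of argument the reader is implicitly expected to supply.
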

\begin{lemma}\label{inc_star_lem}
Let$ <H, k, \eta>: S^*W \to (S')^* W'$ be a morphism. Then we have
\[ H\left(A^* B\right) \subset \eta(A)^*k(B) \subset (S')^* W'\]
for each $A \subset S$ and $B \subset W.$ In particular, if $ <H, k, \eta>: S^*W \to (S')^* W'$ is an isomorphism, $A^*B$ and $\eta(A)^*k(B)$ are isomorphic.
\end{lemma}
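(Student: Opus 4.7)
The plan is to unfold the definition of the star-construction and apply the two commutation identities defining the morphism. Fix $(g,\phi) \in A^*B$, which means $\phi \in A$, $g \in \mathrm{dom}\,\phi$, and $(g,\phi(g)) \in B$. Write $H(g,\phi) = (g',\phi')$ in $(S')^*W'$. From $\mathrm{p}\circ H = \eta \circ \mathrm{p}$ I read off $\phi' = \eta(\phi) \in \eta(A)$, and from $\mathrm{ev}\circ H = k\circ\mathrm{ev}$ I read off $(g',\phi'(g')) = k(g,\phi(g)) \in k(B)$. Thus $H(g,\phi)$ lies in $\eta(A)^*k(B)$. The second inclusion $\eta(A)^*k(B) \subset (S')^*W'$ is immediate from $\eta(A) \subset S'$ and $k(B) \subset W'$ together with the monotonicity in Lemma \ref{basic_rels}(1).

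For the isomorphism assertion, the previous lemma gives that the inverse triplet $<H^{-1},k^{-1},\eta^{-1}>$ is again a morphism $(S')^*W' \to S^*W$. Running the argument above on this inverse morphism with the sets $\eta(A)$ and $k(B)$ in place of $A$ and $B$ yields
\[
    H^{-1}\bigl(\eta(A)^*k(B)\bigr) \subset \eta^{-1}(\eta(A))^*\,k^{-1}(k(B)) = A^*B,
\]
where the equality uses bijectivity of $\eta$ and $k$. Combined with the forward inclusion this gives $H(A^*B) = \eta(A)^*k(B)$.

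It then remains to observe that the restrictions $H|_{A^*B}$, $k|_B$, $\eta|_A$ are homeomorphisms onto $\eta(A)^*k(B)$, $k(B)$, $\eta(A)$ respectively (restrictions of homeomorphisms to preimages are homeomorphisms onto their images), and that the two defining identities survive restriction trivially. So $<H|_{A^*B}, k|_B, \eta|_A>$ is an isomorphism from $A^*B$ to $\eta(A)^*k(B)$.

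There is no real obstacle here; the proof is a mechanical diagram chase. The only thing worth double-checking is the bookkeeping for the isomorphism: one must apply the morphism argument to the inverse triplet (legitimate by the preceding lemma) rather than trying to invert $H$ on an arbitrary subset directly, and one must use that $\eta$ and $k$ are bijections to collapse $\eta^{-1}(\eta(A))$ to $A$ and $k^{-1}(k(B))$ to $B$.
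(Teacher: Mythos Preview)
Your proof is correct and follows essentially the same approach as the paper: the paper invokes the maximality characterization of the star-construction (Remark~\ref{ch_star}) to get $H(A^*B)\subset\eta(A)^*k(B)$ in one line, while you carry out the equivalent element-wise chase, and you additionally spell out the isomorphism assertion (via the inverse morphism and restriction) which the paper leaves implicit.
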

\begin{proof}
By Lemma \ref{basic_rels}, $A^* B \subset S^*W.$ $H\left(A^* B\right) \subset \eta(A)^*k(B) $ is an immediate consequence of the definition of morphisms and Remark \ref{ch_star}.
\end{proof}

The axioms listed in Definition \ref{def_props} are preserved by isomorphisms.
\begin{theorem}\label{iso_inv}
Let $S \subset C_s(G,X)$ and $S' \subset C_s(G',X').$ If $S^*(G\times X)$ and $(S')^*(G'\times X')$ are isomorphic by $<H, k, \eta>$, the following assertions are true.

\begin{enumerate}
	\item The subset $S$ satisfies the compactness axiom if and only if $S'$ does so.
	\item The subset $S$ satisfies the existence axiom on $W$ if and only if $S'$ does so on $k(W).$
	\item The subset $S$ satisfies the uniqueness axiom on $W$ if and only if $S'$ does so on $k(W).$
\end{enumerate}
\end{theorem}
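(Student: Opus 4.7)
The plan is to upgrade the inclusion in Lemma \ref{inc_star_lem} to the equality
\[
H(A^{*}B) = \eta(A)^{*}k(B)
\]
valid for all $A\subset S$ and $B\subset G\times X$, and then read off each of the three axioms directly from this identity.

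To establish the equality, I will exploit the preceding lemma, which guarantees that the inverse triple $<H^{-1}, k^{-1}, \eta^{-1}>$ is itself a morphism from $(S')^{*}(G'\times X')$ back to $S^{*}(G\times X)$. Applying Lemma \ref{inc_star_lem} to this inverse morphism at the subsets $\eta(A) \subset S'$ and $k(B) \subset G'\times X'$ gives
\[
H^{-1}\bigl(\eta(A)^{*}k(B)\bigr) \subset \eta^{-1}(\eta(A))^{*}k^{-1}(k(B)) = A^{*}B,
\]
where the last equality uses that $\eta$ and $k$ are bijections. Combined with the forward inclusion from Lemma \ref{inc_star_lem}, this yields the equality, and since $H$ is a homeomorphism the restriction $H|_{A^{*}B}$ is a homeomorphism onto $\eta(A)^{*}k(B)$. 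I will also note that $\eta(S) = S'$ by surjectivity of $\eta$.

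Armed with this identity, each claim becomes a routine consequence. For (1), given a compact $W'\subset G'\times X'$, the set $W := k^{-1}(W')$ is compact because $k$ is a homeomorphism; the identity with $A = S$ and $B = W$ exhibits $(S')^{*}W'$ as the homeomorphic image of $S^{*}W$, so the compactness axiom for $S$ transfers to $S'$, and the converse direction is symmetric using the inverse isomorphism. For (2) and (3), taking $A = S$ and $B = \{(t,x)\}$ produces a bijection $S^{*}\{(t,x)\} \leftrightarrow (S')^{*}\{k(t,x)\}$; since $k$ restricts to a bijection $W \to k(W)$, the nonemptiness of all such fibers and the at-most-singleton property are transferred between $W$ and $k(W)$, giving existence and uniqueness respectively.

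The only substantive step is turning Lemma \ref{inc_star_lem} into a genuine equality; once the restriction $H|_{A^{*}B}$ is known to be a bijection onto $\eta(A)^{*}k(B)$, the three axioms --- each expressible purely in terms of cardinalities or compactness of the sets $S^{*}B$ for various $B$ --- transfer without any further argument. I expect no technical obstacle beyond this bookkeeping, since the symmetry between $H$ and $H^{-1}$ does all the work.
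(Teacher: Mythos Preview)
Your proof is correct and follows the same route as the paper: you establish the identity $H(A^{*}B)=\eta(A)^{*}k(B)$ (which is exactly the ``isomorphism'' clause of Lemma~\ref{inc_star_lem}, derived, as you do, by applying the inclusion clause to the inverse morphism) and then read off each axiom from the case $A=S$. The paper's own argument is the one-line citation of this identity followed by ``because $H$ is a homeomorphism, we obtain the results,'' so you have simply supplied the details it suppresses.
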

\begin{proof}
From Lemma \ref{inc_star_lem}, we have
\[
	\begin{aligned}
		H\left( S^* W\right) &= \eta(S)^*k(W)\\
						&= (S')^* k(W).
	\end{aligned}
\]
Because $H$ is a homeomorphism, we obtain the results.
\end{proof}

The equivalence class of subsets of $C_s(G,X)$ under the isomorphism relation is rather large. This classification can be regarded as that of the types of problems, as observed by the following result.
\begin{theorem}
Let $X$ be locally compact. If a $\sigma$-invariant subset $S \subset C_s(G,X)$ satisfies the compactness, existence, and uniqueness axioms and has domain $G$, $S^*(G \times X)$ is isomorphic to $S_0^*(G \times X),$ where
\[
	S_0 := \{\psi_x \in C_s(G,X) \mid \psi_x(g) = x \text{ for all } g \in G\}.
\]
\end{theorem}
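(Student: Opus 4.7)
The plan is to invoke Theorem \ref{rep_flow} to obtain a continuous left $G$-action $\pi_S : G \times X \to X$ with $S = \{\pi_S(\cdot, x) \mid x \in X\}$, and then to rectify this action onto the trivial (constant-map) system $S_0$. For each $x \in X$, write $\phi_x := \pi_S(\cdot, x) \in S$. Since $S$ has domain $G$, we have $S^*(G \times X) = G \times S$ and $S_0^*(G \times X) = G \times S_0$. I would define
\[
    \eta(\phi_x) := \psi_x, \qquad k(g,x) := (g, \pi_S(g^{-1}, x)), \qquad H(g, \phi_x) := (g, \psi_x),
\]
so that $H = \mathrm{id}_G \times \eta$. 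This is the expected ``rectifying change of variables'' that trivializes $\pi_S$.

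The main verification splits into homeomorphism and commutativity. The map $k$ is a homeomorphism on $G \times X$ because $\pi_S$ and inversion in $G$ are continuous, with inverse $(g,y) \mapsto (g, \pi_S(g, y))$. For $\eta$, the corollary following Theorem \ref{rep_flow} provides a homeomorphism $p : S \to X$, $\phi \mapsto \phi(e)$, and the analogous map $S_0 \to X$, $\psi_x \mapsto x$, is trivially a homeomorphism; $\eta$ is the composition of these, so it is a homeomorphism, and hence so is $H$. The identity $\mathrm{p} \circ H = \eta \circ \mathrm{p}$ is immediate from the definitions, while for $\mathrm{ev} \circ H = k \circ \mathrm{ev}$ one computes
\[
    \mathrm{ev}(H(g, \phi_x)) = \mathrm{ev}(g, \psi_x) = (g, x) = (g, \pi_S(g^{-1}, \pi_S(g, x))) = k(g, \pi_S(g, x)) = k(\mathrm{ev}(g, \phi_x)),
\]
using that $\pi_S$ is a left $G$-action.

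I do not expect a substantial obstacle: the content of the theorem is essentially that any $\sigma$-invariant, well-posed system is, as a star-construction, indistinguishable from the trivial system obtained by freezing initial conditions, and the explicit rectifier $k(g,x) = (g, \pi_S(g^{-1}, x))$ realizes this observation. The only step that deserves care is the continuity of $\eta$, which is where the local compactness hypothesis on $X$ enters, through its role in the corollary following Theorem \ref{rep_flow}.
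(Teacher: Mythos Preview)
Your proposal is correct and follows essentially the same approach as the paper: both invoke Theorem \ref{rep_flow} to obtain the action $\pi_S$ and then use the rectifying maps built from $(g,x)\mapsto \pi_S(g^{\pm 1},x)$. The only cosmetic difference is that the paper writes the isomorphism in the direction $S_0^*(G\times X)\to S^*(G\times X)$ with $k(g,x)=(g,\pi_S(g,x))$, while you write its inverse; your justification of the continuity of $\eta$ via the corollary to Theorem \ref{rep_flow} is in fact slightly more explicit than the paper's.
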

\begin{proof}
By Theorem \ref{rep_flow}, there exists a continuous map $\pi_S : G \times X \to X$ such that
\[
S = \{\pi_S(\cdot,x) \mid x \in X\}.
\]
We establish a morphism $<H, k, \eta>: S_0^*(G \times X) \to S^*(G \times X)$ by 
\[
	\begin{aligned}
		k(g, x) &:= (g, \pi_S(g,x))\\ 
		\eta(\psi) &:= \pi_S(\cdot, \psi(e))\\
		H(g, \psi) &:=  (g, \eta(\psi)).
	\end{aligned}
\]
Because $k^{-1}(g,x) = (g, \pi_S(g^{-1},x))$ and $\eta^{-1}(\pi_S(\cdot,x)) =  \psi_x,$ $H,k$ and $\eta$ are homeomorphisms. Therefore, $S^*(G \times X)$ is isomorphic to $S_0^*(G \times X).$
\end{proof}
It is apparent from the preceding theorem that an isomorphism may mix spatio-temporal structure. In particular, weak invariance is not respected for $\sigma$-invariant sets.
Considering this point,  a more useful notion is defined as follows.
\begin{definition}[Phase space-preserving morphism]
Let $S \subset C_s(G,X)$,$S' \subset C_s(G',X'),$ $W \subset G \times X$ and $W' \subset G' \times X'$.  A morphism $<H,k, \eta>$ between the star-constructions $S^*W$ and $(S')^* W'$ \emph{preserves phase space} if $k$ has the form $k=(\tau, h),$ where $\tau: W \to G'$ and $h: W \to X'$ are continuous and $h(g,x) = h(g' ,x)$ for all $(g, x), (g',x) \in W.$ 
 
 $S^*W$ and $(S')^* W'$ are \emph{isomorphic via phase space-preserving isomorphisms} if there exists a phase space-preserving isomorphism $<H,k, \eta>$ between $S^*W$ and $(S')^* W'$ such that    $<H^{-1},k^{-1}, \eta^{-1}>$ also preserves phase space.
\end{definition}
\begin{remark}\label{rem_equiv}
The identity morphism is a phase space-preserving isomorphism because we have $k =\mathrm{id}_{W} = (p_G, p_X)$ where $p_G: W \to G$ and $p_X: W \to X$ are canonical projections. Also, the composition of two phase space-preserving isomorphisms preserves phase space. Therefore, the relation of isomorphism via phase space-preserving isomorphisms is an equivalence relation.
\end{remark}

If a morphism preserves phase space and $W$ is open, there is a map induced on the ``phase space."
\begin{lemma}
Let $S \subset C_s(G,X)$, $S' \subset C_s(G',X'),$ $W \subset G \times X$ and $W' \subset G' \times X'$. Let $\hat W := \{ x \in X \mid (g,x) \in W \text{ for some } g \in G\}.$ If a morphism $<H,k, \eta>$ preserves phase space and $W$ is open in $G \times X$, there exists a continuous map $\hat h: \hat W \to X'$ defined by setting $\hat h(x) := h(g,x)$ for some $g \in W_x := \{g \in G \mid (g, x) \in W\}.$ 
\end{lemma}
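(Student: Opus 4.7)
The plan has two parts: verify that $\hat h$ is well-defined, and then establish continuity using the openness of $W$.

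For well-definedness, the defining property of a phase space-preserving morphism is precisely that $h(g,x) = h(g',x)$ whenever $(g,x), (g',x) \in W$. So for each $x \in \hat W$, whenever $g, g' \in W_x$ we have $(g,x), (g',x) \in W$ and therefore $h(g,x) = h(g',x)$. Hence the value $\hat h(x) := h(g,x)$ does not depend on the choice of $g \in W_x$.

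For continuity, I would exploit the openness of $W$ to produce a local ``section.'' Fix $x_0 \in \hat W$ and choose any $g_0 \in W_{x_0}$, so $(g_0, x_0) \in W$. Since $W$ is open in $G \times X$, there is an open neighborhood of the form $U \times V \subset W$ with $g_0 \in U$ and $x_0 \in V$. In particular, $\{g_0\} \times V \subset W$, so for every $x \in V$ we have $g_0 \in W_x$, and by well-definedness
\[
\hat h(x) = h(g_0, x).
\]
Thus on the open neighborhood $V$ of $x_0$ in $\hat W$ (note that $\hat W$ is itself open in $X$, being the projection of the open set $W$), the restriction $\hat h|_V$ coincides with the continuous map $x \mapsto h(g_0, x)$, which is continuous as the composition of $h$ with the continuous injection $x \mapsto (g_0, x)$. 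Continuity at $x_0$ follows, and since $x_0$ was arbitrary, $\hat h$ is continuous on $\hat W$.

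Neither step is a serious obstacle; the only subtlety is that one must use the openness hypothesis on $W$ to ensure a whole neighborhood $V$ of $x_0$ lies in $W_x$ for a single fixed $g_0$, so that $\hat h$ locally equals a continuous function of $x$ alone. Without openness one could only say $\hat h(x) = h(g_x, x)$ for some varying $g_x$, which would not immediately yield continuity.
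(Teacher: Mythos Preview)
Your proof is correct but proceeds by a different route from the paper. The paper establishes continuity by showing that the projection $p_X: W \to \hat W$, $(g,x)\mapsto x$, is an identification (quotient) map; since $h$ is constant on the fibers of $p_X$ by the phase-space-preserving hypothesis, the universal property of quotient maps then yields a unique continuous $\hat h$ with $\hat h \circ p_X = h$. The verification that $p_X$ is an identification map is where the openness of $W$ enters in the paper's argument. Your approach instead produces a local section: for each $x_0$ you fix $g_0$ and use openness of $W$ to find a product neighborhood $U\times V$ on which $\hat h$ agrees with the manifestly continuous map $x\mapsto h(g_0,x)$. This is more elementary and avoids the quotient-map machinery; in effect you are implicitly using that $p_X$ is \emph{open} (a stronger and more immediate fact here than being a quotient map). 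The paper's approach is slightly more abstract and fits the categorical spirit of the surrounding section, while yours makes the role of the openness hypothesis more transparent.
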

\begin{proof}
Let the canonical projection $p_X: W \to \hat W$ be defined by $p_X(g,x) : = x.$ It suffices to show that $p_X$ is an identification map because we have the following commutative diagram (for details, see Ch 4, Theorem 3.1 in \cite{james1966topology}):
\[
\begin{diagram}
\node{W} 
\arrow{s,r}{p_X}
\arrow{se,t}{h} 
\\
\node {\hat W} 
\arrow{e,b}{\hat{h}} 
\node{X'} 
\end{diagram}
\]
Let $\hat U \subset \hat W$ be open in $\hat W$. Then there exists an open subset $U \subset X$ such that $\hat U = U \cap \hat W.$ Because $p_X^{-1}(\hat U) = (G \times U) \cap W,$ $p_X^{-1}(\hat U)$ is open in $W.$

Conversely, let $\hat U \subset \hat W$  and $p_X^{-1}(\hat U)$ be open in $W.$ Then there exists an open subset $V \subset G \times X$ such that $p_X^{-1}(\hat U) = V \cap W.$ For each $x \in \hat U,$ we may find $g \in G$ with $(g,x) \in V \cap W.$ Because $V \cap W$ is open, there exist open subsets $U_1 \subset G$ and $U_2 \subset X$ with $(g, x) \in U_1 \times U_2 \subset V \cap W.$ 

Now we show that $U_2 \cap \hat W \subset \hat U.$ For each $x' \in U_2 \cap \hat W,$  we have $(g, x') \in U_1 \times U_2$ for all $g \in U_1.$ Therefore, $(g, x') \in V \cap W = p_X^{-1}(\hat U).$ By definition, $x' = p_X(g, x') \in \hat U.$
\end{proof}
\begin{remark}
Because $k(g,x) = (\tau(g, x), \hat h(x)) \in W'$ holds for some $g \in G$ for each $x \in \hat W$, we have $\hat h: \hat W \to \hat W' := \{ x' \in X' \mid (g',x') \in W' \text{ for some } g' \in G'\}.$ 
\end{remark}
\begin{remark}
For a phase space-preserving isomorphism $<H,k, \eta>$ between $S^*W$ and $(S')^* W'$, $<H^{-1},k^{-1}, \eta^{-1}>$ also preserves phase space if and only if $\hat h$ is injective.
\end{remark}

The property in the following lemma is crucial in the discussion below.
\begin{lemma}\label{iv_lem}
Let $S \subset C_s(G,X)$, $S' \subset C_s(G',X'),$ $W \subset G \times X$ and $W' \subset G' \times X'$. If a morphism $<H,k, \eta>$ preserves phase space, then we have
\[
	\eta(\phi)(\tau(g, x)) = \hat h(x),
\]
for each $(g, \phi) \in S^*\{(g,x)\},$ where $k=(\tau, h).$
\end{lemma}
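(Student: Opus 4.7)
The plan is to unpack the two defining identities of a morphism, evaluated at the point $(g,\phi) \in S^*\{(g,x)\}$, and match coordinates. Concretely, by the morphism condition $\mathrm{p} \circ H = \eta \circ \mathrm{p}$, the second coordinate of $H(g,\phi)$ must be $\eta(\phi)$; so I write $H(g,\phi) = (g'', \eta(\phi))$ for some $g'' \in G'$, which necessarily lies in $\mathrm{dom}(\eta(\phi))$ since $H(g,\phi) \in (S')^*W'$ by Remark \ref{ch_star}.

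Next I apply $\mathrm{ev}$ to this expression, obtaining
\[
\mathrm{ev}(H(g,\phi)) = (g'', \eta(\phi)(g'')).
\]
On the other hand, the morphism condition $\mathrm{ev} \circ H = k \circ \mathrm{ev}$ combined with $\phi(g) = x$ and $k=(\tau,h)$ gives
\[
\mathrm{ev}(H(g,\phi)) = k(g, \phi(g)) = k(g,x) = (\tau(g,x), h(g,x)).
\]
Equating coordinates yields $g'' = \tau(g,x)$ and $\eta(\phi)(g'') = h(g,x)$.

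Finally, the phase space-preserving condition says that $h(g,x)$ depends only on $x$, and this is precisely how $\hat h$ was defined in the preceding lemma: $\hat h(x) = h(g,x)$ for any $g \in W_x$. Substituting $g'' = \tau(g,x)$ and $h(g,x) = \hat h(x)$ gives $\eta(\phi)(\tau(g,x)) = \hat h(x)$, as desired. I do not expect any real obstacle here; the statement is essentially a diagram chase, and the main thing to be careful about is simply checking that $\tau(g,x)$ lies in the domain of $\eta(\phi)$, which is automatic from the fact that $H(g,\phi)$ is a legitimate element of the star-construction $(S')^*W'$.
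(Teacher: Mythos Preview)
Your argument is correct and is essentially identical to the paper's proof: both use $\mathrm{p}\circ H=\eta\circ\mathrm{p}$ to write $H(g,\phi)=(g'',\eta(\phi))$, then apply $\mathrm{ev}\circ H=k\circ\mathrm{ev}$ together with $\phi(g)=x$ to read off $g''=\tau(g,x)$ and $\eta(\phi)(g'')=h(g,x)=\hat h(x)$. Your write-up is in fact slightly more explicit about why $\tau(g,x)\in\mathrm{dom}(\eta(\phi))$, which the paper leaves implicit.
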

\begin{proof}
Let $(g, \phi) \in S^*\{(g,x)\}.$
From  $\mathrm{p} \circ H =  \eta \circ \mathrm{p}$, we have
\[
	H(g, \phi) = (t', \eta(\phi))
\]
for some $t' \in G'.$ Because $\mathrm{ev} \circ H =  k \circ \mathrm{ev},$ we obtain
\[
	(t', \eta(\phi)(t')) = k(g, \phi(g)) = (\tau(g, \phi(g)), h(g, \phi(g))).
\]
Therefore, $\eta(\phi)(\tau(g, x)) = \hat h(x).$
\end{proof}

\begin{theorem}\label{hom_thm}
Let the star-constructions $S^*W$ and $(S')^* W'$ be isomorphic via phase space-preserving isomorphisms, where $W$ and $W'$ are open. Then, $\hat W$ and $\hat W'$ are homeomorphic. Further, $W_x$ is homeomorphic to $W'_{\hat h(x)}$ for each $x \in \hat W.$ 
\end{theorem}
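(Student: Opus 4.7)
The plan is to produce the two homeomorphisms directly from the given isomorphism data. Write the phase space-preserving isomorphism as $<H, k, \eta>$ with $k = (\tau, h)$, and its inverse as $<H^{-1}, k^{-1}, \eta^{-1}>$ with $k^{-1} = (\tau', h')$ by the phase space-preserving assumption. The preceding lemma gives induced continuous maps $\hat h : \hat W \to \hat W'$ and $\widehat{h'} : \hat W' \to \hat W$.

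First I would show $\hat h$ and $\widehat{h'}$ are mutually inverse. Fix $x \in \hat W$ and pick any $g \in W_x$. By the definition of $\hat h$ and the fact that $k(g,x) \in W'$, we have
\[
k(g,x) = (\tau(g,x),\, \hat h(x)) \in W'.
\]
Applying $k^{-1}$ and using its form yields
\[
(g,x) = k^{-1}(\tau(g,x), \hat h(x)) = \bigl(\tau'(\tau(g,x), \hat h(x)),\, \widehat{h'}(\hat h(x))\bigr),
\]
so $\widehat{h'} \circ \hat h = \mathrm{id}_{\hat W}$. The symmetric argument gives $\hat h \circ \widehat{h'} = \mathrm{id}_{\hat W'}$. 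Since both maps are continuous, $\hat h : \hat W \to \hat W'$ is a homeomorphism.

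For the fiberwise statement, fix $x \in \hat W$ and define
\[
\tau_x : W_x \to G', \qquad \tau_x(g) := \tau(g,x),
\]
and analogously $\tau'_{\hat h(x)} : W'_{\hat h(x)} \to G$ by $\tau'_{\hat h(x)}(g') := \tau'(g', \hat h(x))$. Both are continuous as compositions of continuous maps with the inclusions $g \mapsto (g,x)$ and $g' \mapsto (g',\hat h(x))$. Because $k$ sends $W$ into $W'$, the identity $k(g,x) = (\tau_x(g), \hat h(x))$ forces $\tau_x(g) \in W'_{\hat h(x)}$, so $\tau_x$ takes values in $W'_{\hat h(x)}$; the symmetric argument, together with $\widehat{h'}(\hat h(x)) = x$, shows $\tau'_{\hat h(x)}$ takes values in $W_x$. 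The computation above applied pointwise gives $\tau'_{\hat h(x)} \circ \tau_x = \mathrm{id}_{W_x}$ and $\tau_x \circ \tau'_{\hat h(x)} = \mathrm{id}_{W'_{\hat h(x)}}$, so $\tau_x$ is the desired homeomorphism.

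The main obstacle I anticipate is purely bookkeeping: one must carefully extract the two coordinate functions of $k^{-1}$ and check that the hypothesis on $<H^{-1}, k^{-1}, \eta^{-1}>$ preserving phase space is exactly what allows $\widehat{h'}$ to be well-defined and to play the role of the inverse of $\hat h$. No deeper argument is required beyond the bijectivity of $k$ on $W$ and the characterization of $\hat h$ via any choice of $g \in W_x$; openness of $W$ and $W'$ enters only to guarantee that the induced maps $\hat h$ and $\widehat{h'}$ exist as continuous maps via the previous lemma.
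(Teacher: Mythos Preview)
Your proof is correct and follows essentially the same approach as the paper: both extract inverse maps $\hat h$ and $\widehat{h'}$ (the paper writes $\hat{\tilde h}$) from the phase space-preserving structure of $k$ and $k^{-1}$, verify they are mutual inverses via $k^{-1}\circ k = \mathrm{id}$, and then run the same argument on the fibers $W_x$ via $\tau_x$ and $\tau'_{\hat h(x)}$. The paper packages the first part as a commutative diagram of projections and phrases the fiber step as the equality $k(W_x \times \{x\}) = W'_{\hat h(x)} \times \{\hat h(x)\}$, but the underlying computation is identical to yours.
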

\begin{proof}
Let the canonical projection $p_X: W \to \hat W$ be defined by $p_X(g,x) : = x.$ Similarly, we define the canonical projection $p_X': W' \to \hat W'.$ If we set $k^{-1} = (\tilde \tau, \tilde h)$, we have the following commutative diagram, and it is immediately clear that $\hat h: \hat W \to \hat W'$ is a homeomorphism:
\[
\begin{diagram}
\node{W} 
\arrow{e,t}{k}
\arrow{s,r}{p_X}
\arrow{se,t}{h} 
\node{W'}
\arrow{s,r}{p_X'}
\arrow{e,t}{k^{-1}}
\arrow{se,t}{\tilde h}
\node {W}
\arrow{s,r}{p_X} 
\\
\node {\hat W} 
\arrow{e,b}{\hat{h}} 
\node{\hat W'} 
\arrow{e,b}{\hat{\tilde{h}}} 
\node{\hat W}
\end{diagram}
\]
Now we show that $W_x$ is homeomorphic to $W'_{\hat h(x)}$ for each $x \in \hat W.$ We define $\tau_x : W_x \to W'_{\hat h(x)}$ by $\tau_x(g) := \tau(g,x).$ This is well-defined and continuous because
\[
	k(g,x) = (\tau(g,x), \hat h(x)) \in W'
\]
for each $g \in W_x.$ To show that $\tau_x$ is a homeomorphism, it suffices to confirm that $k(W_x \times \{x\}) = W'_{\hat h(x)} \times \{\hat h(x)\}$ because it implies $\tau_x \circ \tilde\tau_{\hat h(x)} = \mathrm{id}_{W'_{\hat h(x)} }$ and $\tilde\tau_{\hat h(x)} \circ \tau_x = \mathrm{id}_{W_x}.$ By definition, we have
\[
	\begin{aligned}
		&k(W_x \times \{x\}) \subset W'_{\hat h(x)} \times \{\hat h(x)\}\\
		&k^{-1}(W'_{\hat h(x)} \times \{\hat h(x)\}) \subset W_x \times \{x\}.
	\end{aligned}
\] 
Therefore, $k(W_x \times \{x\}) = W'_{\hat h(x)} \times \{\hat h(x)\}.$
\end{proof}

The notion of isomorphism via phase space-preserving isomorphisms respects basic dynamical properties.
\begin{theorem}\label{inv_orb}
Let the star-constructions $S^*(G\times X)$ and $(S')^*(G'\times X')$ be isomorphic via phase space-preserving isomorphisms. Then we have
\[
	\hat h\left(\mathcal{O}(\phi) \right) = \mathcal{O}\left( \eta(\phi)\right)
\]
for all $\phi \in S.$
\end{theorem}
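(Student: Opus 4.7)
The plan is to prove the equality by showing both inclusions, in each case applying Lemma \ref{iv_lem} to a phase space-preserving morphism: the given one for $\subset$, and its inverse for $\supset$.

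For the inclusion $\hat h(\mathcal{O}(\phi)) \subset \mathcal{O}(\eta(\phi))$, I would fix $y \in \mathcal{O}(\phi)$, so $y = \phi(g)$ for some $g \in \mathrm{dom}(\phi)$. Writing $k = (\tau, h)$, Lemma \ref{iv_lem} applied to $(g, \phi) \in S^*\{(g, y)\}$ yields $\eta(\phi)(\tau(g, y)) = \hat h(y)$. The point $\tau(g, y)$ lies in $\mathrm{dom}(\eta(\phi))$ because $H(g, \phi) = (\tau(g, y), \eta(\phi))$ must lie in $(S')^*(G' \times X')$ (this uses $\mathrm{p}\circ H = \eta\circ\mathrm{p}$ and $\mathrm{ev}\circ H = k\circ\mathrm{ev}$). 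Hence $\hat h(y) \in \mathcal{O}(\eta(\phi))$.

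For the reverse inclusion, I use that $\langle H^{-1}, k^{-1}, \eta^{-1}\rangle$ is also a phase space-preserving morphism by hypothesis. Writing $k^{-1} = (\tilde\tau, \tilde h)$ and setting $\psi := \eta(\phi) \in S'$, I take $y' \in \mathcal{O}(\psi)$, so $y' = \psi(g')$ for some $g' \in \mathrm{dom}(\psi)$. Applying Lemma \ref{iv_lem} to the inverse morphism at $(g', \psi) \in (S')^*\{(g', y')\}$ gives
\[
	\eta^{-1}(\psi)\bigl(\tilde\tau(g', y')\bigr) = \hat{\tilde h}(y').
\]
Since $\eta^{-1}(\psi) = \phi$, this reads $\phi(\tilde\tau(g', y')) = \hat{\tilde h}(y')$. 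From the diagram in the proof of Theorem \ref{hom_thm} we have $\hat h \circ \hat{\tilde h} = \mathrm{id}_{\hat W'}$, so applying $\hat h$ yields $\hat h\bigl(\phi(\tilde\tau(g', y'))\bigr) = y'$, placing $y'$ in $\hat h(\mathcal{O}(\phi))$.

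The only subtle point, which I would flag as the main technical check, is that Lemma \ref{iv_lem} is stated for star-constructions over general $W$, but here we work on the full $S^*(G \times X)$ and $(S')^*(G' \times X')$; the domains $\hat W$ and $\hat{W'}$ collapse to $X$ and $X'$ respectively, so $\hat h$ and $\hat{\tilde h}$ are globally defined mutually inverse homeomorphisms, and the arguments $\tau(g, y)$ and $\tilde\tau(g', y')$ always land in the relevant domains via the evaluation/projection compatibility of the morphisms. Beyond this verification, everything reduces to two symmetric applications of Lemma \ref{iv_lem}.
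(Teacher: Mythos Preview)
Your proof is correct and follows essentially the same strategy as the paper: both inclusions are obtained from Lemma \ref{iv_lem}. The forward inclusion is identical to the paper's argument. For the reverse inclusion there is a minor difference in bookkeeping: the paper takes $(g_0,\phi):=H^{-1}(g',\eta(\phi))$ and re-applies Lemma \ref{iv_lem} to the \emph{forward} morphism at this preimage point, obtaining $y=\hat h(\phi(g_0))$ directly; you instead apply Lemma \ref{iv_lem} to the inverse morphism and then compose with $\hat h$, using $\hat h\circ\hat{\tilde h}=\mathrm{id}$ from Theorem \ref{hom_thm}. Your route is slightly more symmetric but imports an extra fact; the paper's route stays with a single morphism. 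Both are short and equivalent in substance.
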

\begin{proof}

Let $\phi \in S.$ If $y \in \hat h\left(\mathcal{O}(\phi) \right),$ we may find $g \in \mathrm{dom}(\phi)$ such that $y = \hat h(\phi(g))$ by definition. From Lemma \ref{iv_lem}, we have
\[
	\eta(\phi)(\tau(g, \phi(g))) = \hat h(\phi(g)).
\]
Therefore, $y \in \mathcal{O}\left( \eta(\phi)\right).$ 

Conversely, let $y \in \mathcal{O}\left( \eta(\phi)\right).$ By definition, there exists $g' \in \mathrm{dom}\left( \eta(\phi)\right)$ such that  $y = \eta(\phi)(g').$ Therefore, $(g', \eta(\phi)) \in (S')^*\{(g',y)\}.$ Let $(g_0, \phi) := H^{-1}(g', \eta(\phi)).$ Then we have $\tau(g_0, \phi(g_0)) = g',$ and 
\[
	y = \eta(\phi)(g')= \eta(\phi)(\tau(g_0, \phi(g_0))) = \hat h(\phi(g_0)).
\]
Therefore, $y \in \hat h\left(\mathcal{O}(\phi) \right).$ 
\end{proof}
\begin{corollary}
Let the star-constructions $S^*(G\times X)$ and $(S')^*(G'\times X')$ be isomorphic via phase space-preserving isomorphisms, where $S$ and $S'$ are $\sigma$-invariant. Then $A \subset X$ is weakly invariant if and only if $\hat h (A)$ is weakly invariant. In particular, $x_0 \in X$ is an equilibrium if and only if $\hat h(x_0)$ is an equilibrium.
\end{corollary}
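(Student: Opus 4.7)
The plan is to reduce both statements to the orbit-preservation identity of Theorem \ref{inv_orb}, using the $\sigma$-invariance of $S$ and $S'$ together with Lemma \ref{lem_iv_ch} to normalize the ``initial time'' to the group identity. Since isomorphism via phase space-preserving isomorphisms is an equivalence relation (Remark \ref{rem_equiv}), it suffices to prove one direction of the weak invariance equivalence.

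For the forward direction, I would fix $y \in \hat h(A)$ and write $y = \hat h(x)$ for some $x \in A$. Weak invariance of $A$ supplies $(e, \phi) \in S^*\{(e,x)\}$ with $\mathcal{O}(\phi) \subset A$. Applying the isomorphism $<H, k, \eta>$ and invoking Lemma \ref{iv_lem} gives $\eta(\phi)(\tau(e, x)) = \hat h(x) = y$, while Theorem \ref{inv_orb} gives $\mathcal{O}(\eta(\phi)) = \hat h(\mathcal{O}(\phi)) \subset \hat h(A)$. It then remains only to convert this data into a witness for the weak invariance of $\hat h(A)$ at $y$.

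The main obstacle is exactly this last step: the morphism sends $(e,\phi)$ to $(\tau(e,x), \eta(\phi))$, but $\tau(e,x)$ need not equal the identity $e'$ of $G'$, whereas the definition of weak invariance demands the initial time be $e'$. This is precisely where the $\sigma$-invariance of $S'$ is needed: setting $\hat\phi' := \sigma(\tau(e,x), \eta(\phi))$ yields $\hat\phi' \in S'$ with $\hat\phi'(e') = \eta(\phi)(\tau(e,x)) = y$ by Lemma \ref{iv_lem}, and shifts preserve orbits so $\mathcal{O}(\hat\phi') = \mathcal{O}(\eta(\phi)) \subset \hat h(A)$. Thus $(e', \hat\phi') \in (S')^*\{(e', y)\}$ is the required witness. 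The reverse direction follows by applying the same argument to $<H^{-1}, k^{-1}, \eta^{-1}>$, whose ``phase space map'' is $\hat{\tilde h} = \hat h^{-1}$ by Theorem \ref{hom_thm} (note that here $W = G \times X$, so $\hat W = X$).

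For the equilibrium claim, I would observe that $x_0$ is an equilibrium exactly when $\{x_0\}$ is weakly invariant: if $(e,\phi) \in S^*\{(e,x_0)\}$ satisfies $\mathcal{O}(\phi) \subset \{x_0\}$, then since $x_0 = \phi(e) \in \mathcal{O}(\phi)$ we have $\mathcal{O}(\phi) = \{x_0\}$. Because $\hat h$ is a bijection on $X$, $\hat h(\{x_0\}) = \{\hat h(x_0)\}$, and the equilibrium statement follows immediately from the weak invariance equivalence applied to the singleton $\{x_0\}$.
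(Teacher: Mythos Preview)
Your proposal is correct and follows essentially the same approach as the paper: both reduce to the orbit-preservation identity of Theorem \ref{inv_orb} and then use $\sigma$-invariance (via Lemma \ref{lem_iv_ch}, which is exactly your explicit shift $\hat\phi' = \sigma(\tau(e,x),\eta(\phi))$) to normalize the initial time to the group identity. The only cosmetic differences are that the paper argues the reverse implication and handles the equilibrium statement by a direct appeal to Theorem \ref{inv_orb}, whereas you reduce it to the weak invariance of the singleton $\{x_0\}$; both treatments are valid.
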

\begin{proof}
Because we may consider the inverse of the morphism, it suffices to show that $A$ is weakly invariant if $\hat h(A)$ is so.

Let $x \in A.$ Because $\hat h(x) \in \hat h(A)$ and $\hat h(A)$ is weakly invariant, we may find $\psi \in S'$ such that $\psi(e) = \hat h(x)$ and $\mathcal{O}(\psi) \subset \hat h(A).$ Therefore, we have
\[
	\hat h\left(\mathcal{O}(\eta^{-1}(\psi)) \right) = \mathcal{O}\left( \psi \right) \subset \hat h(A)
\]
by Theorem \ref{inv_orb}. Because $\hat h$ is a homeomorphism, $\mathcal{O}(\eta^{-1}(\psi)) \subset A.$ 

By Lemma \ref{lem_iv_ch}, the proof is complete if we show $x \in \mathcal{O}(\eta^{-1}(\psi)).$ This follows from
\[
	\hat h(x) = \psi(e) \in \mathcal{O}(\psi) = \hat h\left(\mathcal{O}(\eta^{-1}(\psi)) \right).
\]
Therefore, $A$ is weakly invariant.

From Theorem \ref{inv_orb}, it is clear that $x_0 \in X$ is an equilibrium if and only if $\hat h(x_0)$ is an equilibrium.
\end{proof}

Not only are the orbits preserved, but their parametrization is also in good correspondence.
\begin{theorem}\label{isotopy_thm}
Let the star-constructions $S^*(G\times X)$ and $(S')^*(G'\times X')$ be isomorphic via phase space-preserving isomorphisms. For each $\phi \in S$, we define a map $\mathcal{D}_\phi : \mathrm{dom}(\phi) \to \mathrm{dom}(\eta(\phi))$ by $\mathcal{D}_\phi(g) := \tau(g, \phi(g)).$ Then, $\mathcal{D}_\phi $ is a homeomorphism.

Further, if $S$ and $S'$ have domains $G$ and $G',$ the map $\mathcal{D}: S \times G \to G'$ defined by $\mathcal{D}(\phi, g) := \mathcal{D}_\phi(g)$ is continuous. In particular, if $S$ is path connected, then all maps in the family $\{ \mathcal{D}_\phi : G \to G' \mid \phi \in S \}$ are isotopic.

\end{theorem}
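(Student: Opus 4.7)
The plan is to prove that each $\mathcal{D}_\phi$ is a homeomorphism by constructing an explicit inverse from the dual phase space-preserving isomorphism $<H^{-1}, k^{-1}, \eta^{-1}>$, and then to deduce joint continuity of $\mathcal{D} \colon S \times G \to G'$ from continuity of the evaluation map in Remark \ref{ch_star}. The isotopy statement will follow simply by transporting a path in $S$ through $\mathcal{D}$.

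For the first assertion, I would first verify well-definedness: Lemma \ref{iv_lem} applied to $(g, \phi) \in S^*\{(g, \phi(g))\}$ yields $\eta(\phi)(\tau(g, \phi(g))) = \hat h(\phi(g))$, so indeed $\mathcal{D}_\phi(g) = \tau(g, \phi(g)) \in \mathrm{dom}(\eta(\phi))$; continuity is then clear from continuity of $\tau$ and $\phi$. Writing $k^{-1} = (\tilde\tau, \tilde h)$, the candidate inverse is
\[
\tilde{\mathcal{D}}_{\eta(\phi)}(g') := \tilde\tau(g', \eta(\phi)(g')) \quad \text{on } \mathrm{dom}(\eta(\phi)).
\]
Applying Lemma \ref{iv_lem} now to the inverse morphism — which is phase space-preserving by hypothesis — places $\tilde{\mathcal{D}}_{\eta(\phi)}(g')$ inside $\mathrm{dom}(\eta^{-1}(\eta(\phi))) = \mathrm{dom}(\phi)$. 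The two compositions $\tilde{\mathcal{D}}_{\eta(\phi)} \circ \mathcal{D}_\phi$ and $\mathcal{D}_\phi \circ \tilde{\mathcal{D}}_{\eta(\phi)}$ then collapse to reading off the $G$- and $G'$-components of $k^{-1}\circ k$ and $k \circ k^{-1}$ at the points $(g, \phi(g))$ and $(g', \eta(\phi)(g'))$ respectively, and so are the identity.

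For the joint continuity, the hypothesis that $S$ has domain $G$ makes $(\phi, g) \mapsto (g, \phi(g))$ on $S \times G$ coincide with (a factor-swap of) the restriction of the continuous evaluation map $\mathrm{ev}$ of Remark \ref{ch_star}, so $\mathcal{D} = \tau \circ \mathrm{ev}$ is continuous. The isotopy claim then follows by choosing a path $\gamma \colon [0,1] \to S$ from $\phi_0$ to $\phi_1$ and setting $H(t, g) := \mathcal{D}(\gamma(t), g)$: continuity is automatic and each slice $H(t, \cdot) = \mathcal{D}_{\gamma(t)}$ is a homeomorphism by the first part, which is exactly the definition of an isotopy. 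I expect the main obstacle to be the clean bookkeeping that verifies $\tilde{\mathcal{D}}_{\eta(\phi)}$ lands in $\mathrm{dom}(\phi)$ and that the two compositions genuinely reduce to the identity; this is precisely where the dual invocation of Lemma \ref{iv_lem} (once for the morphism, once for its inverse) is indispensable, and where the two-sided phase space preservation assumption pays off.
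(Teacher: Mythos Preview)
Your proposal is correct and follows essentially the same approach as the paper. The only cosmetic difference is that the paper first isolates a general composition formula $\mathcal{D}''_\phi = \mathcal{D}'_{\eta(\phi)} \circ \mathcal{D}_\phi$ (Lemma~\ref{comp_dom_lem}) for an arbitrary composite of two phase space-preserving isomorphisms, and then specializes it to the pair $\langle H, k, \eta\rangle$ and $\langle H^{-1}, k^{-1}, \eta^{-1}\rangle$; you instead carry out that computation directly for this special case, which amounts to the same use of Lemma~\ref{iv_lem} on both sides.
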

To prove this theorem, we observe the following property of the map $\mathcal{D}_\phi.$
\begin{lemma}\label{comp_dom_lem}
Let the star-constructions $S^*(G\times X),$ $(S')^*(G'\times X')$ and $(S'')^*(G''\times X'')$ be isomorphic via phase space-preserving isomorphisms $<H, k, \eta>: S^*(G\times X)\to (S')^*(G'\times X')$ and $<H', k', \eta'>:(S')^*(G'\times X') \to (S'')^*(G''\times X'')$, where $k=(\tau, h)$ and $k' =(\tau', h')$.  

If $k'' := k' \circ k = (\tau'', h''),$ we have
\[
	\mathcal{D}_{\phi}'' = \mathcal{D}_{\eta(\phi)}' \circ \mathcal{D}_\phi,
\]
where $\mathcal{D}_{\eta(\phi)}' (g') := \tau'(g', \eta(\phi)(g'))$ and $\mathcal{D}_{\phi}'' (g) := \tau''(g, \phi(g)).$ 
\end{lemma}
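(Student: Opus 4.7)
The plan is to reduce the claimed identity $\mathcal{D}_\phi'' = \mathcal{D}'_{\eta(\phi)} \circ \mathcal{D}_\phi$ to a direct unwinding of $k'' = k' \circ k$, with the single nontrivial input being Lemma \ref{iv_lem}. Fix $\phi \in S$ and $g \in \mathrm{dom}(\phi)$. By definition, $(\mathcal{D}'_{\eta(\phi)} \circ \mathcal{D}_\phi)(g) = \tau'(\tau(g,\phi(g)),\, \eta(\phi)(\tau(g,\phi(g))))$. The inner evaluation of $\eta(\phi)$ at $\tau(g,\phi(g))$ is precisely the content of Lemma \ref{iv_lem} applied to $(g,\phi) \in S^*\{(g,\phi(g))\}$, which yields $\eta(\phi)(\tau(g,\phi(g))) = \hat{h}(\phi(g)) = h(g,\phi(g))$.

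Next, I would unwind the left-hand side. Since $k(g,\phi(g)) = (\tau(g,\phi(g)), h(g,\phi(g)))$, applying $k'$ gives
\[
k''(g,\phi(g)) = k'(\tau(g,\phi(g)), h(g,\phi(g))) = \bigl(\tau'(\tau(g,\phi(g)), h(g,\phi(g))),\, h'(\tau(g,\phi(g)), h(g,\phi(g)))\bigr).
\]
Reading off the first coordinate, $\tau''(g,\phi(g)) = \tau'(\tau(g,\phi(g)), h(g,\phi(g)))$, which matches the expression obtained above for $(\mathcal{D}'_{\eta(\phi)} \circ \mathcal{D}_\phi)(g)$. Hence $\mathcal{D}_\phi''(g) = \tau''(g,\phi(g)) = (\mathcal{D}'_{\eta(\phi)} \circ \mathcal{D}_\phi)(g)$ for every $g$, giving the identity.

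There is no real obstacle here beyond bookkeeping; the only subtlety is to confirm that $k''$ indeed admits a splitting of the form $(\tau'', h'')$ with $h''$ depending only on the spatial coordinate, so that $\mathcal{D}_\phi''$ is even well-defined as written. This follows immediately from the computation above: $h''(g,x) = h'(\tau(g,x), h(g,x)) = \hat{h}'(\hat{h}(x))$, which is independent of $g$, and in fact shows $\hat{h}'' = \hat{h}' \circ \hat{h}$. Thus the composite morphism is phase space-preserving and the statement is meaningful, while the identity itself is the first-coordinate shadow of the associativity $k'' = k' \circ k$ once Lemma \ref{iv_lem} is used to identify the intermediate second coordinate.
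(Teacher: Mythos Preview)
Your proof is correct and follows essentially the same route as the paper: both unwind the composition $k'' = k' \circ k$ in coordinates and invoke Lemma \ref{iv_lem} to identify $\eta(\phi)(\tau(g,\phi(g)))$ with $\hat h(\phi(g))$. The only difference is presentational (you start from the right-hand side, the paper from the left), and your additional remark that the composite is again phase space-preserving is a welcome clarification.
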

\begin{proof}
Using Lemma \ref{iv_lem}, we have
\[
\begin{aligned}
	\mathcal{D}_{\phi}''(g) &=  \tau'\left(\tau(g, \phi(g)), \hat h(\phi(g))\right)\\
					&= \tau'\left(\tau(g, \phi(g)), \eta(\phi)(\tau(g, \phi(g)))\right) \\
					&= \mathcal{D}_{\eta(\phi)}' \circ \mathcal{D}_\phi(g),
	\end{aligned}
\]
for each $g \in \mathrm{dom}(\phi).$
\end{proof}

\begin{proof}[Proof of Theorem \ref{isotopy_thm}]
From Lemma \ref{iv_lem}, $\mathcal{D}_\phi$ is well-defined and its continuity is obvious. To show that it is a homeomorphism, it suffices to show that $(\mathcal{D}_\phi)^{-1} = \mathcal{D}_{\eta(\phi)}',$ but this follows immediately from Lemma \ref{comp_dom_lem}.

If $S$ and $S'$ have domains $G$ and $G',$ the continuity of $\mathcal{D}$ follows from that of the evaluation map because we have $\mathcal{D}(\phi, g) = \tau(g, \phi(g)) = \tau(\mathrm{ev}(g, \phi)).$
\end{proof}
An example of a path connected set $S \subset C_s(G,X)$ is given by the following lemma.
\begin{lemma}
Let $\Phi : G \times X \to X$ be a $G$-action and $X$ be path connected and locally compact. Then, $S = \{\Phi(\cdot,x) \mid x \in X\}$ is path connected.
\end{lemma}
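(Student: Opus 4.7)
The plan is to reduce the path-connectedness of $S$ to that of $X$ by pushing a path forward. Given two arbitrary points $\Phi(\cdot,x_0), \Phi(\cdot,x_1) \in S$, path-connectedness of $X$ supplies a continuous $\gamma:[0,1]\to X$ with $\gamma(0)=x_0$ and $\gamma(1)=x_1$. I would then define the candidate path $\Gamma:[0,1]\to S$ by
\[
	\Gamma(s) := \Phi(\cdot, \gamma(s)),
\]
so that $\Gamma(0)=\Phi(\cdot,x_0)$ and $\Gamma(1)=\Phi(\cdot,x_1)$, and the task reduces to proving that $\Gamma$ is continuous.

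For continuity, the cleanest route is via Lemma \ref{seq_lemma}: given a compact $K \subset G$ and a sequence $s_n \to s$ in $[0,1]$, it suffices to show $\sup_{g\in K} d(\Phi(g,\gamma(s_n)), \Phi(g, \gamma(s))) \to 0$. This is exactly the estimate carried out in the proof of Corollary \ref{action_set}: since $\gamma(s_n)\to \gamma(s)$ in the locally compact space $X$, there is a relatively compact neighborhood $U$ of $\gamma(s)$ with $\bar U$ compact and $\gamma(s_n)\in U$ for all sufficiently large $n$; then $\Phi$ is uniformly continuous on the compact product $K\times \bar U$, yielding the required uniform estimate. Equivalently, one checks that the map $\iota : X \to S$, $x\mapsto \Phi(\cdot,x)$, is continuous by the same argument, and then $\Gamma = \iota \circ \gamma$ is continuous as a composition; hence $S=\iota(X)$ is the continuous image of a path connected space, therefore path connected.

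No serious obstacle is anticipated, since the argument is essentially a rerun of the continuity computation in Corollary \ref{action_set}, with the parameter set $[0,1]$ playing the role of the indexing sequence. The single point demanding care is the use of local compactness of $X$, which enters precisely to produce the compact set $\bar U$ on which uniform continuity of $\Phi$ can be applied; without it, joint continuity of $\Phi$ alone does not guarantee the uniform convergence on $K$ needed to land in the compact-open topology on $S$.
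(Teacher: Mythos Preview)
Your proposal is correct and follows essentially the same approach as the paper: pick a path $\gamma$ in $X$ between $x_0$ and $x_1$ and set $\Gamma(s)=\Phi(\cdot,\gamma(s))$. The paper's proof is terser---it simply declares $\Gamma$ to be the desired path without spelling out continuity---whereas you supply the verification by invoking the uniform-continuity argument from Corollary~\ref{action_set}; this added detail is appropriate and does not depart from the paper's line of reasoning.
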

\begin{proof}
Let $\phi_0, \phi_1 \in S,$ where $\phi_i = \Phi(\cdot, x_i)$ for $i=0,1.$ Because $X$ is path connected, there exists a continuous map $\gamma: [0,1] \to X.$ Then, $\Gamma(s) := \Phi(\cdot, \gamma(s))$ is the desired path between $\phi_0$ and $\phi_1.$
\end{proof}

A change of variables yields a phase space-preserving isomorphism.
\begin{theorem}
Let $S \subset C_s(G,X)$ and $h: X \to X'$ and $\tau: G \to G'$ be homeomorphisms. If we set
\[
	S' := \{ h \circ \phi \circ \tau^{-1} \mid \phi \in S\} \subset C_s(G,X),
\]
then $S^*(G\times X)$ and $(S')^*(G' \times X')$ are isomorphic via phase space-preserving isomorphisms.
\end{theorem}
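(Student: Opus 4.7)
The approach is to build the isomorphism directly out of the two given homeomorphisms. Define $k : G \times X \to G' \times X'$ by $k(g,x) := (\tau(g), h(x))$, define $\eta : S \to S'$ by $\eta(\phi) := h \circ \phi \circ \tau^{-1}$, and set $H(g, \phi) := (\tau(g), \eta(\phi))$. First I would verify that $\eta$ actually takes values in $C_s(G', X')$: the domain $\tau(\mathrm{dom}(\phi))$ is a connected open subset of $G'$, and any partial map with connected domain properly extending $\eta(\phi)$ would, after conjugation by $(h^{-1}, \tau)$, yield a proper extension of $\phi$, contradicting the maximality of $\phi$. Clearly $\eta$ is then a bijection $S \to S'$ with inverse $\psi \mapsto h^{-1} \circ \psi \circ \tau$.

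Next I would check the morphism axioms by direct computation. For $(g, \phi) \in S^*(G \times X)$ one has $\mathrm{ev}(H(g,\phi)) = (\tau(g), \eta(\phi)(\tau(g))) = (\tau(g), h(\phi(g))) = k(\mathrm{ev}(g, \phi))$ and $\mathrm{p}(H(g, \phi)) = \eta(\phi) = \eta(\mathrm{p}(g, \phi))$. The phase-space-preserving property holds since $k = (\tau \circ p_G,\; h \circ p_X)$ with $p_G$ and $p_X$ the canonical projections of $W = G \times X$, so the $X'$-component $h(x)$ of $k(g,x)$ does not depend on $g$.

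The main technical step is the continuity of $\eta$; continuity of $k$ is immediate from that of $\tau$ and $h$, and continuity of $H$ reduces to that of $\eta$ together with $\tau$. By Lemma \ref{seq_lemma} it suffices to establish compact convergence: if $\phi_n \to \phi$ in $C_s(G,X)$ and $K' \subset \mathrm{dom}(\eta(\phi)) = \tau(\mathrm{dom}(\phi))$ is compact, then $K := \tau^{-1}(K')$ is a compact subset of $\mathrm{dom}(\phi)$, so eventually $K \subset \mathrm{dom}(\phi_n)$ and $\sup_{g \in K} d(\phi_n(g), \phi(g)) \to 0$. Since $\phi_n(K)$ is then eventually contained in a compact neighborhood of $\phi(K)$ on which $h$ is uniformly continuous, one concludes $\sup_{g' \in K'} d'(\eta(\phi_n)(g'), \eta(\phi)(g')) \to 0$, giving $\eta(\phi_n) \to \eta(\phi)$ in $C_s(G', X')$.

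Finally, applying the same recipe to $(\tau^{-1}, h^{-1})$ yields the inverse triple $<H^{-1}, k^{-1}, \eta^{-1}>$, and a direct check shows $H \circ H^{-1} = \mathrm{id}$ and $H^{-1} \circ H = \mathrm{id}$; since $k^{-1}(g', x') = (\tau^{-1}(g'), h^{-1}(x'))$ has the same product form, the inverse morphism also preserves phase space. The only genuine obstacle is the continuity of $\eta$ — specifically, maintaining uniform control when composing with $h$ on the left and $\tau^{-1}$ on the right — and this is dispatched by the uniform continuity of $h$ on compact sets as sketched above.
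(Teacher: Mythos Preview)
Your construction is identical to the paper's: the same maps $k(g,x)=(\tau(g),h(x))$, $\eta(\phi)=h\circ\phi\circ\tau^{-1}$, and $H(g,\phi)=(\tau(g),\eta(\phi))$. The only substantive difference is in the continuity of $\eta$: the paper handles it in one line via the subbase identity $\eta^{-1}(W(K,V))=W(\tau^{-1}(K),h^{-1}(V))$, whereas your sequential argument invokes a ``compact neighborhood of $\phi(K)$'' --- note $X$ is not assumed locally compact here, so that phrasing is not quite right, though the intended uniform-continuity estimate over the compact set $\phi(K)$ can be salvaged by a standard $\varepsilon/2$ covering argument.
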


\begin{proof}
We define a morphism $<H,k, \eta>$ by setting
\[
	\begin{aligned}
		k(g,x) &:= (\tau(g), h(x))\\
		\eta(\phi) &:= h \circ \phi \circ \tau^{-1}\\
		H(g, \phi) &:= (\tau(g), \eta(\phi))
	\end{aligned}
\]
for each $x\in X,$ $g \in G,$ and $(g, \phi) \in S^*(\{(g,x)\}).$ It is clear that the map $k$ is a homeomorphism. For $\eta,$ we have $\eta^{-1}(W(K,V)) = W(\tau^{-1}(K), h^{-1}(V))$ for all compact $K \subset G'$ and open $V \subset X'.$ Therefore, $\eta$ is also a homeomorphism.
\end{proof}


Considering Theorem \ref{isotopy_thm}, we may define a generalization of the concept of topological equivalence and topological conjugacy.

\begin{definition}[Topological equivalence]
Let $S \subset C_s(G,X)$, $S' \subset C_s(G,X'),$  where $S$ and $S'$ have the domain $G$.  
 $S$ and $S'$ are \emph{topologically equivalent} if $S^*(G\times X)$ and $(S')^*(G\times X')$ are isomorphic via phase space-preserving isomorphisms and these morphisms can be taken so that $\mathcal{D}_\phi$ is isotopic to the identity of $G$ and $\mathcal{D}_\phi(e) = e$ for all $\phi\in S.$

 $S$ and $S'$ are \emph{topologically conjugate} if $S^*(G\times X)$ and $(S')^*(G\times X')$ are isomorphic via phase space-preserving isomorphisms and these morphisms can be taken so that $\mathcal{D}_\phi$  is the identity on $G$ for all $\phi\in S.$

 \end{definition}
 \begin{lemma}
 Topological equivalence is an equivalence relation.
 \end{lemma}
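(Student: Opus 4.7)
The plan is to verify reflexivity, symmetry, and transitivity in turn, using Remark \ref{rem_equiv} (the identity morphism is phase space-preserving, and the composition of phase space-preserving isomorphisms stays phase space-preserving) together with the composition rule $\mathcal{D}''_\phi = \mathcal{D}'_{\eta(\phi)} \circ \mathcal{D}_\phi$ from Lemma \ref{comp_dom_lem}. Reflexivity is immediate: the identity morphism on $S^*(G \times X)$ has $k = \mathrm{id}_{G \times X}$, so $\tau(g,x) = g$ and $\mathcal{D}_\phi = \mathrm{id}_G$ for every $\phi \in S$, which is trivially isotopic to the identity and satisfies $\mathcal{D}_\phi(e) = e$.

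For transitivity, given witnessing morphisms $<H, k, \eta>$ and $<H', k', \eta'>$ between $S, S'$ and between $S', S''$ respectively, their composition is again a phase space-preserving isomorphism by Remark \ref{rem_equiv}, and Lemma \ref{comp_dom_lem} expresses the resulting map as $\mathcal{D}''_\phi = \mathcal{D}'_{\eta(\phi)} \circ \mathcal{D}_\phi$. By hypothesis each factor is a self-homeomorphism of $G$ isotopic to $\mathrm{id}_G$ and fixing $e$; the composition inherits both properties, the required isotopy being the pointwise composition $s \mapsto F_2(s) \circ F_1(s)$ of the two given isotopies, which is continuous since composition is continuous in the compact-open topology under the paper's standing local compactness assumption on $G$.

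For symmetry, let $<H, k, \eta>$ witness the equivalence of $S$ and $S'$, with phase space-preserving inverse $<H^{-1}, k^{-1}, \eta^{-1}>$. Applying Lemma \ref{comp_dom_lem} to their composition, which is the identity on $S^*(G \times X)$, yields $\mathrm{id}_G = \mathcal{D}'_{\eta(\phi)} \circ \mathcal{D}_\phi$, so the inverse morphism's data satisfies $\mathcal{D}'_{\eta(\phi)} = \mathcal{D}_\phi^{-1}$; since $\eta : S \to S'$ is a bijection, every $\psi \in S'$ arises in this way. The condition $\mathcal{D}_\phi^{-1}(e) = e$ is immediate from $\mathcal{D}_\phi(e) = e$. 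The remaining claim, that $\mathcal{D}_\phi^{-1}$ is isotopic to $\mathrm{id}_G$, is the step I expect to require the most care: given an isotopy $F : G \times [0,1] \to G$ from $\mathrm{id}_G$ to $\mathcal{D}_\phi$, one needs the map $(g,s) \mapsto F(\cdot, s)^{-1}(g)$ to be jointly continuous, which yields the desired isotopy from $\mathrm{id}_G$ to $\mathcal{D}_\phi^{-1}$. This relies on continuity of inversion in $\mathrm{Homeo}(G)$ with the compact-open topology, a standard fact for the locally compact Hausdorff group $G$ of the paper's setting but the one place where the topological hypotheses on $G$ must be explicitly invoked.
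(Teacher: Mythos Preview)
Your proof is correct and follows the same outline as the paper's one-line argument, which merely cites Remark~\ref{rem_equiv}, Lemma~\ref{comp_dom_lem}, and ``the property of isotopy class.'' For the symmetry step you can avoid invoking continuity of inversion in $\mathrm{Homeo}(G)$ altogether: given an isotopy $F$ from $\mathrm{id}_G$ to $\mathcal{D}_\phi$, the map $(g,s)\mapsto \mathcal{D}_\phi^{-1}\bigl(F(g,1-s)\bigr)$ is already a continuous isotopy from $\mathrm{id}_G$ to $\mathcal{D}_\phi^{-1}$, since $\mathcal{D}_\phi^{-1}$ is a single fixed continuous map.
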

 \begin{proof}
 This is a consequence of Remark \ref{rem_equiv}, Lemma \ref{comp_dom_lem}, and the property of isotopy class.
 \end{proof}
If $G=\mathbb{R}$ and $S$ is given by a flow,  these definitions relate to the usual one as follows.
\begin{lemma}\label{lem_monotone}
A continuous function $f : \mathbb{R} \to \mathbb{R}$ is isotopic to identity if and only if it is monotonically increasing and bijective.
\end{lemma}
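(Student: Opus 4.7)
The plan is to first pin down what ``isotopic to the identity'' means here: I will interpret it as the existence of a continuous map $F : [0,1] \times \mathbb{R} \to \mathbb{R}$ such that each slice $F(s, \cdot) : \mathbb{R} \to \mathbb{R}$ is a homeomorphism, $F(0, \cdot) = \mathrm{id}$, and $F(1, \cdot) = f$. This matches the usage in Theorem \ref{isotopy_thm}, where the $\mathcal{D}_\phi$'s are already homeomorphisms.

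For necessity, I would first observe that $f = F(1, \cdot)$ is a homeomorphism of $\mathbb{R}$, so in particular a continuous bijection; by the intermediate value theorem, any continuous bijection of an interval is strictly monotone, so $f$ is either strictly increasing or strictly decreasing, and in either case bijective. To rule out the decreasing case, I would consider the continuous function $\delta : [0,1] \to \mathbb{R}$ defined by $\delta(s) := F(s, 1) - F(s, 0)$. Injectivity of each slice gives $\delta(s) \neq 0$ for every $s$, so by connectedness of $[0,1]$ the sign of $\delta$ is constant. Since $\delta(0) = 1 > 0$, we deduce $\delta(1) = f(1) - f(0) > 0$, which forces $f$ to be strictly increasing.

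For sufficiency, I would exhibit the explicit linear isotopy $F(s, x) := (1-s)x + s f(x)$. Continuity in $(s, x)$ and the boundary conditions $F(0, \cdot) = \mathrm{id}$, $F(1, \cdot) = f$ are immediate. Each slice $F(s, \cdot)$ is strictly increasing: for $x < y$,
\[
F(s, y) - F(s, x) = (1-s)(y - x) + s(f(y) - f(x)) > 0,
\]
since both summands are non-negative and the first is strictly positive when $s < 1$ while the second is strictly positive when $s > 0$. Surjectivity of $F(s, \cdot)$ onto $\mathbb{R}$ follows from $F(s, x) \to \pm\infty$ as $x \to \pm\infty$, which uses that $f$, being a monotone continuous bijection of $\mathbb{R}$, satisfies $f(x) \to \pm\infty$ as $x \to \pm \infty$. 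A continuous strictly increasing surjection of $\mathbb{R}$ onto itself is automatically a homeomorphism, so each slice lies in the correct class.

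The main subtlety is the orientation-preservation argument in the necessity direction, where one must exploit injectivity of the slices together with connectedness of $[0,1]$ rather than mere continuity of $F$; the sufficiency direction is routine once the straight-line homotopy is recognized as staying within the homeomorphism class.
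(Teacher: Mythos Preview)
Your proof is correct and follows the same approach as the paper: the sufficiency direction uses exactly the straight-line isotopy $H(s,x)=(1-s)x+sf(x)$ that the paper employs. The only difference is that you supply a complete argument for the necessity direction (the sign-of-$\delta$ trick via connectedness of $[0,1]$), whereas the paper simply declares this implication ``clear from the definition.''
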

\begin{proof}
If $f : \mathbb{R} \to \mathbb{R}$ is isotopic to identity, it is clear from the definition that $f$ is monotonically increasing and bijective.

Conversely, if  $f$ is monotonically increasing and bijective, the map $H:[0,1] \times \mathbb{R} \to \mathbb{R}$ defined by
\[
	H(s, x) := s f(x) + (1-s) x
\]
is an isotopy between $f$ and the identity.
\end{proof}
\begin{lemma}\label{lem_inv}
Let $f:X\times Y \to Z$ be a continuous map.
If $f(x, \cdot):Y \to Z$ is a homeomorphism for each $x \in X,$ there exists a continuous map $g: X \times Z \to Y$ satisfying
\[
	f(x, g(x, z)) = z
\]
and
\[
	g(x,f(x,y)) = y
\]
for all $x \in X$, $y\in Y$ and $z \in Z.$
\end{lemma}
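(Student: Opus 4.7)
The plan is to define $g$ pointwise by $g(x, z) := (f(x, \cdot))^{-1}(z)$, which makes sense since each $f(x, \cdot)$ is a homeomorphism. The two required identities are then immediate from the set-theoretic relation between $g(x, \cdot)$ and $f(x, \cdot)$, so the real content of the lemma is the joint continuity of $g$.

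For continuity, I would consider the continuous bijection $F: X \times Y \to X \times Z$ defined by $F(x, y) := (x, f(x, y))$, whose inverse is exactly $F^{-1}(x, z) = (x, g(x, z))$. Since $g = \pi_Y \circ F^{-1}$, where $\pi_Y$ is the projection onto $Y$, it suffices to show that $F^{-1}$ is continuous, i.e., that $F$ is a homeomorphism. Working sequentially (the ambient spaces of this paper being second-countable and metric), I would take $(x_n, z_n) \to (x_0, z_0)$ in $X \times Z$, set $y_n := g(x_n, z_n)$ and $y_0 := g(x_0, z_0)$, and aim to prove $y_n \to y_0$. If $y_{n_k} \to y^{*}$ along any subsequence, then continuity of $f$ forces $f(x_0, y^{*}) = \lim f(x_{n_k}, y_{n_k}) = \lim z_{n_k} = z_0$, whence $y^{*} = y_0$ by injectivity of $f(x_0, \cdot)$; so every cluster point of $\{y_n\}$ equals $y_0$.

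The hard part will be to rule out that $\{y_n\}$ has no cluster point at all, which can in principle happen without extra regularity of $Y$. I would handle this by comparing $y_n$ with the auxiliary sequence $\tilde{y}_n := (f(x_0, \cdot))^{-1}(z_n)$, which converges to $y_0$ by continuity of $(f(x_0, \cdot))^{-1}$. Combining the identity $f(x_n, y_n) = z_n = f(x_0, \tilde{y}_n)$ with uniform continuity of $f$ on the compact set $\{(x_n, \tilde{y}_n)\}_n \cup \{(x_0, y_0)\}$, the images $f(x_n, y_n)$ and $f(x_n, \tilde{y}_n)$ are eventually arbitrarily close in $Z$; a local argument using continuity of $(f(x_n, \cdot))^{-1}$ near $y_0$, together with the local compactness available in the ambient setting, then confines $y_n$ to a compact neighborhood of $y_0$ and supplies the missing subsequential compactness. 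I expect this confinement step to be the most delicate technical point in the proof.
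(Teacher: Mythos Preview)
Your route and the paper's diverge at the key step. The paper does not argue sequentially at all: it first shows that $f$ itself is open by writing $f(U\times V)=\bigcup_{u\in U}f(u,V)$ as a union of open sets (each $f(u,\cdot)$ being a homeomorphism), then passes to $\bar f(x,y):=(x,f(x,y))$ (your map $F$), declares it open, continuous and bijective, and reads off the continuity of $g$ from $\bar f^{-1}(x,z)=(x,g(x,z))$; no sequences, metric, or compactness are invoked. Your approach instead isolates the confinement step as the real content and brings in local compactness to obtain subsequential convergence. What your approach buys is honesty about where the difficulty lies: openness of $\bar f$ is \emph{equivalent} to continuity of $g$, so the paper's passage from ``$f$ open'' to ``$\bar f$ open'' is precisely the statement being proved rather than a formal consequence of the displayed computation, and justifying it leads back to a local-compactness argument of the kind you sketch. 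Your confinement outline still needs a control on the inverses $(f(x_n,\cdot))^{-1}$ that is uniform in $n$; one clean way to obtain it, once $Y$ is locally compact, is to fix a compact neighbourhood $K$ of $y_0$ and use joint continuity of $f$ on the compact set $\partial K$ to keep $f(x_n,\partial K)$ bounded away from $z_n$ for large $n$, forcing $y_n\in K$. In the paper's actual application one has $Y=Z=\mathbb{R}$, where either route is easily completed.
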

\begin{proof}
First we show that $f$ is an open map. Let $W \subset X\times Y$ be open and take $z \in f(W).$ By definition, we may find $(x,y) \in X\times Y$ with $f(x,y) =z.$ Let $U\subset X$ and $V \subset Y$ be open neighborhoods of $x$ and $y$ such that $U \times V \subset W.$ Then we have
\[
	z \in f(U \times V) = \bigcup_{u \in U} f(u, V) \subset f(W).
\]
Because $f(u,\cdot)$ is a homeomorphism for each $u \in U,$ $f(U \times V) $ is open. Therefore, $f$ is an open map.

Now we define $\bar f(x,y) := (x, f(x,y)).$ Then it is open, continuous and bijective. Therefore, $\bar f$ is a homeomorphism. Therefore, we may find a continuous map $g: X \times Z \to Y$ satisfying
\[
	f(x, g(x, z)) = z
\]
and
\[
	g(x,f(x,y)) = y
\]
for all $x \in X$, $y\in Y$ and $z \in Z.$
\end{proof}
 \begin{theorem}
Let the star-constructions $S^*(\mathbb{R}\times X)$ and $(S')^*(\mathbb{R}\times X')$ be described by flows, that is, there exist flows $\Phi : \mathbb{R} \times X \to X$ and $\Phi : \mathbb{R}\times X' \to X'$ such that
\[
	\begin{aligned}
		S &= \{\Phi(\cdot,x) \mid x \in X\}\\
		S' &=\{\Psi(\cdot,x') \mid x' \in X'\}.
	\end{aligned}
\]
Assume that $X$ and $X'$ are locally compact. Then $S$ and $S'$ are topologically equivalent if and only if there exist a homeomorphism $h: X \to X'$ and a continuous map $\tau: \mathbb{R}\times X \to \mathbb{R}$ such that
\[
	\Psi(\tau(t,x), h(x)) = h(\Phi(t,x))
\] 
for all $(t,x) \in \mathbb{R}\times X,$ where $\tau(\cdot, x)$ is monotonically increasing and bijective, and $\tau(0,x) = 0$ for all $x \in X$.

Also, $S$ and $S'$ are topologically conjugate if and only if there exists a homeomorphism $h: X \to X'$ such that
\[
	\Psi(t, h(x)) = h(\Phi(t,x))
\] 
for all $(t,x) \in \mathbb{R}\times X.$ 
 \end{theorem}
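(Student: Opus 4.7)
The plan is to prove both equivalences by translating between a phase space-preserving isomorphism $<H,k,\eta>$ (with $k=(\tau,h)$, second component depending only on $x$) on one side and the pair $(h,\tau_c)$ on the other.

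For the forward direction, I would let $\hat h : X \to X'$ be the induced homeomorphism (from Theorem \ref{hom_thm}) and set $h := \hat h$. For $\phi = \Phi(\cdot, x_0)$, applying Lemma \ref{iv_lem} at $(0,\phi) \in S^*\{(0,x_0)\}$ and using $\mathcal{D}_\phi(0) = \tau(0,x_0) = 0$ pins down $\eta(\phi)(0) = h(x_0)$, hence $\eta(\phi) = \Psi(\cdot, h(x_0))$. Applying the same lemma at general $(t,\phi) \in S^*\{(t,\Phi(t,x_0))\}$ then yields $\Psi(\tau(t,\Phi(t,x_0)), h(x_0)) = h(\Phi(t,x_0))$, and setting $\tilde\tau(t,x_0) := \tau(t,\Phi(t,x_0)) = \mathcal{D}_\phi(t)$ gives the classical identity. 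Monotone bijectivity of $\tilde\tau(\cdot, x_0)$ and $\tilde\tau(0,x_0)=0$ follow from the isotopy assumption via Lemma \ref{lem_monotone}. The conjugacy case is the specialization $\mathcal{D}_\phi = \mathrm{id}$, forcing $\tilde\tau(t,x_0) = t$.

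For the reverse direction, I would set $\tau_m(t,x) := \tau_c(t, \Phi(-t,x))$ and define
\[
k(t,x) := (\tau_m(t,x), h(x)), \quad \eta(\Phi(\cdot,x_0)) := \Psi(\cdot, h(x_0)), \quad H(t,\phi) := (\tau_m(t,\phi(t)), \eta(\phi)).
\]
The morphism axioms follow directly from the classical identity. Continuity of $\eta$ uses the corollary to Theorem \ref{rep_flow} identifying $S$ with $X$ via $\phi \mapsto \phi(0)$; continuity of $k$ and $H$ is then routine. The key point is that $k$ and $H$ must be homeomorphisms, which reduces to $\tau_m(\cdot, x_1)$ being a homeomorphism of $\mathbb{R}$ for every $x_1$. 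For this, I would derive the semigroup identity $\tau_c(t+s,x) = \tau_c(t, \Phi(s,x)) + \tau_c(s,x)$ from the classical identity and the flow laws of $\Phi$ and $\Psi$ (both sides are times at which $\Psi(\cdot, h(x))$ reaches $h(\Phi(t+s,x))$; their difference lies in the stabilizer of $h(x)$, which is discrete unless $h(x)$ is $\Psi$-fixed, and continuity plus $\tau_c(0,x)=0$ forces equality; the fixed-point case is handled directly since then $\Phi(-t,x) = x$). Setting $s = -t$ yields $\tau_m(t,x) = -\tau_c(-t,x)$, a monotone homeomorphism of $\mathbb{R}$. Lemma \ref{lem_inv} then gives continuity of $k^{-1}$, and of $H^{-1}$ via Theorem \ref{isotopy_thm}; phase space preservation of $k$ and $k^{-1}$ is immediate from the block form; and $\mathcal{D}_\phi = \tau_c(\cdot, x_0)$ is isotopic to the identity by Lemma \ref{lem_monotone}. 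The conjugacy direction is the specialization $\tau_c(t,x) = t$, giving $k(t,x) = (t, h(x))$.

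The main technical obstacle is the reparameterization bookkeeping: $\tau$ in the morphism setting takes the current position as its second argument, whereas $\tau_c$ in the classical setting takes the initial condition, so the bridge $\tau_m(t,x) = \tau_c(t,\Phi(-t,x))$ must preserve monotone bijectivity slice-by-slice in $x$, which is precisely where the semigroup identity for $\tau_c$ does the work.
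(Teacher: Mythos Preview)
Your proposal is correct and follows essentially the same route as the paper: the forward direction is identical, and in the reverse direction your cocycle identity $\tau_c(t+s,x)=\tau_c(t,\Phi(s,x))+\tau_c(s,x)$ at $s=-t$ is precisely the relation $-\tau_c(-t,\Phi(t,x))=\tau_c(t,x)$ that the paper establishes by the same fixed-point/aperiodic/periodic case split on the stabilizer. The only cosmetic difference is that the paper writes the first component of $k$ as $-\tau_c(-t,x)$ from the outset (so $k$ is visibly a homeomorphism via Lemma~\ref{lem_inv} and the identity is then used to compute $\mathcal{D}_\phi$), whereas you write it as $\tau_c(t,\Phi(-t,x))$ (so the morphism axioms are immediate and the identity is instead used to show $k$ is invertible); after the identity these are the same map.
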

 \begin{proof}
Let $S$ and $S'$ be topologically equivalent. Let a map $\mathcal{D}: S \times \mathbb{R} \to \mathbb{R}$ be that in Theorem \ref{isotopy_thm}. We define
\[
	\bar \tau(t, x) : = \mathcal{D}(\Phi(\cdot, x), t).
\]
From Lemma \ref{iv_lem}, we have
\[
	\eta(\Phi(\cdot, x))(\bar \tau(t, x)) = \hat h(\Phi(t, x)).
\]
By considering the case $t=0,$ we deduce
\[
	\eta(\Phi(\cdot, x)) = \Psi(\cdot, \hat h(x)),
\]
which implies
\[
	\Psi(\bar \tau(t, x), \hat h(x)) = \hat h(\Phi(t, x)).
\]
Because $\mathcal{D}(\Phi(\cdot, x), \cdot)$ is isotopic to the identity,  $\bar\tau(t, x)$ is monotonically increasing and bijective for all $x \in X$ by Lemma \ref{lem_monotone}. Also, we have $\bar\tau(0, x) = x$ for all $x \in X$ by definition. The map $\hat h: X \to X'$ is a homeomorphism by Theorem \ref{hom_thm}.

Conversely, let there exist a homeomorphism $h: X \to X'$ and a continuous map $\tau: \mathbb{R}\times X \to \mathbb{R}$ such that
\[
	\Psi(\tau(t,x), h(x)) = h(\Phi(t,x))
\] 
for all $(t,x) \in \mathbb{R}\times X,$ where $\tau(\cdot, x)$ is monotonically increasing and bijective, and $\tau(0,x) = 0$ for all $x \in X$.

We define a morphism $<H,k,\eta>: S^*(\mathbb{R}\times X) \to (S')^*(\mathbb{R}\times X')$ by
\[
	\begin{aligned}
		\eta(\Phi(\cdot, x)) &:= \Psi(\cdot, h(x))\\
		k(t,x) &= (\tau'(t,x), h(x))\\
		H(t, \Phi(\cdot, x)) &= (\tau(t,x), \eta(\Phi(\cdot, x))),
	\end{aligned}
\]
where $\tau'(t,x): = -\tau(-t, x).$ Because $\eta^{-1}(\Psi(\cdot, y)) = \Phi(\cdot, h^{-1}(x)),$ $\eta$ is a homeomorphism. By applying Lemma \ref{lem_inv} to $f(y, t):= \tau'(t, h^{-1}(y))$, we see that $k$ is a homeomorphism. Therefore, $S^*(\mathbb{R}\times X)$ and $(S')^*(\mathbb{R}\times X')$ are isomorphic via a phase space-preserving isomorphism. For the isotopy property, let us show that $\mathcal{D}_{\Phi(\cdot,x)}(t) =-\tau(-t, \Phi(t,x))$ is monotonically increasing and bijective for all $t$. If $x$ is an equilibrium, this is obvious. If $x$ is not an an equilibrium, there are two possibilities. If $\mathcal{O}(\Phi(\cdot, x))$ has no self-intersection, $\Psi(\cdot, h(x))$ is injective. In this case we have
\[
	 \tau(-t, \Phi(t,x)) + \tau(t,x) = 0,
\]
which follows from
\[
	\begin{aligned}
		\Psi(\tau(-t, \Phi(t,x)) + \tau(t,x), h(x)) &= \Psi(\tau(-t, \Phi(t,x)), h(\Phi(t,x)))\\
									&= h(\Phi(-t,\Phi(t,x))) \\
									&= h(x). 
	\end{aligned}
\]
Therefore, $\mathcal{D}_{\Phi(\cdot,x)}(t) = \tau(t,x).$ If $\mathcal{O}(\Phi(\cdot, x))$ has a self-intersection, $\Psi(\cdot, h(x))$ is periodic. Let $T$ be the period. Then we have
\[
	\tau(-t, \Phi(t,x)) + \tau(t,x) = n(t) T
\]
for some $n(t) \in \mathbb{Z}.$ Because $n(t) = (\tau(-t, \Phi(t,x)) + \tau(t,x))/T$ is a continuous function that takes integer values, it is a constant function. By considering $t =0,$ we have $n(t) =0.$ Therefore, we have $\mathcal{D}_{\Phi(\cdot,x)}(t) = \tau(t,x)$ also in this case.

The proof for the case of topological conjugacy is similar.
 \end{proof}
 \begin{remark}
 Thus, our definition of topological equivalence is stronger than the usual one. Indeed, in the usual definition, the continuity of $\tau$ is not guaranteed. However, there are examples where $\tau$ can be retaken so that it becomes continuous. It is not clear whether such regularization is always possible.
 \end{remark}

\section{Applications}
In this section, we apply the formalism given in the preceding sections to some examples to illustrate its use and properties.

\begin{example}
The solution sets of the following systems are not isomorphic.
\begin{enumerate}
	\item $\dot x \in [1/2,1]$.
	\item $\dot x \in \{1/2,1\}$.
	\item $\dot x = 1.$
\end{enumerate}
Indeed,  (1) and (3) satisfy the compactness axiom, while inclusion (2) does not. The uniqueness axiom is satisfied only in equation (3). From Theorem \ref{iso_inv}, these axioms are invariant under isomorphisms. Therefore, these three examples are not isomorphic.
\end{example}
\begin{proof}
It is an immediate consequence of the Arzel\`a-Ascoli theorem that the solution set of inclusion (1) satisfies the compactness axiom. For inclusion (2), consider a sequence $\{\phi_n\}$ of solutions defined inductively by
\[
	\phi_0(t) = \begin{cases}
				t + \frac{1}{4} &(t< -\frac{1}{2})\\
				\frac{1}{2} t &(-\frac{1}{2} \leq t < \frac{1}{2})\\
				t - \frac{1}{4} &(\frac{1}{2} \leq t)\\
			\end{cases}
\]
and
\[
	\phi_{n+1}(t) = \begin{cases}
				\frac{1}{2} \left( \phi_n(2 t + 1) - \frac{3}{4} \right) &(t< 0)\\
				\frac{1}{2} \left( \phi_n(2 t - 1) + \frac{3}{4} \right) &(0 \leq t).\\
			\end{cases}
\]
In $C(\mathbb{R}, \mathbb{R}),$ $\{\phi_n\}$  converges to $\psi$ given by
\[
	\psi(t) = \begin{cases}
				t + \frac{1}{4} &(t< -1)\\
				\frac{3}{4} t &(-1 \leq t <1)\\
				t - \frac{1}{4} &(1 \leq t),\\
			\end{cases}
\]
which is not a solution of inclusion (2). By Lemma \ref{lem_conv_unique}, it follows that no subsequence of $\{\phi_n\}$ converges to a solution of inclusion (2) in $C_s(\mathbb{R}, \mathbb{R})$. From Theorem \ref{kamke}, this implies that the solution set of inclusion (2) does not satisfy the compactness axiom.

It is clear that the uniqueness axiom is satisfied only in equation (3).
\end{proof}
\begin{example}
Let us consider the following family of equations with a parameter $a \in \mathbb{R}:$
\begin{equation}\label{eq_sn}
	x' = x^2 +a.
\end{equation}
Let the corresponding solution set be $S_a.$ Then $S_a^* \mathbb{R}^2$ and $S_b^* \mathbb{R}^2$ are isomorphic via a phase space-preserving isomorphism if $a$ and $b$ have the same sign. 
\end{example}
\begin{proof}
The solution $\phi^a_{(t_0,x_0)}$ of (\ref{eq_sn}) satisfying the initial condition $\phi^a_{(t_0,x_0)}(t_0) = x_0$ is given by 
\[
	\phi^a_{(t_0,x_0)}(t) = \begin{cases}
				\sqrt{a} \tan\left( \sqrt{a}(t-t_0) + \tan^{-1}(x_0/\sqrt{a}) \right) & (a >0)\\
				-\sqrt{-a} \tanh\left( \sqrt{-a}(t-t_0) + \tanh^{-1}(-x_0/\sqrt{-a}) \right) &(a <0, |x_0| < \sqrt{-a})\\
				\pm \sqrt{-a} & (a <0, x_0 =\pm\sqrt{-a})\\
				\frac{-\sqrt{-a}}{ \tanh\left( \sqrt{-a}(t-t_0) + \tanh^{-1}(-\sqrt{-a}/x_0) \right) }&(a <0, |x_0| > \sqrt{-a})\\
			\end{cases}
\]
From these expressions, we have
\[
	\mathrm{dom}(\phi^a_{(t_0,x_0)}) = \left(t_0-\frac{1}{\sqrt{a}}\tan^{-1}\frac{x_0}{\sqrt{a}}-\frac{\pi}{2\sqrt{a}},t_0-\frac{1}{\sqrt{a}}\tan^{-1}\frac{x_0}{\sqrt{a}}+ \frac{\pi}{2\sqrt{a}}\right)
\]
for $a> 0$ and 
\[
\mathrm{dom}(\phi^a_{(t_0,x_0)}) = \begin{cases}
		\mathbb{R} & (|x_0| \leq \sqrt{-a}) \\
		(-\infty,  t_0 - \frac{1}{\sqrt{-a}}\tanh^{-1}(-\sqrt{-a}/x_0)) & (x_0 > \sqrt{-a}) \\
		(t_0 - \frac{1}{\sqrt{-a}}\tanh^{-1}(-\sqrt{-a}/x_0), \infty) & (x_0 < \sqrt{-a})
	\end{cases}
\]
for $a<0.$ In either case, the maps
\[
\begin{aligned}
	\eta(\phi^a_{(t_0,x_0)}) &:= \phi^b_{(\sqrt{a/b}t_0,\sqrt{b/a}\,x_0)}\\
	k(t,x) &:= \left( \sqrt{\frac{a}{b}}t  , \sqrt{\frac{b}{a}}x\right)\\
	H(t, \phi)& = \left( \sqrt{\frac{a}{b}}t , \eta(\phi)\right)
\end{aligned}
\]
give a phase space-preserving isomorphism between $S_a^* \mathbb{R}^2$ and $S_b^* \mathbb{R}^2.$ It can be verified directly that the domains are also preserved, as in Theorem \ref{isotopy_thm}.
\end{proof}
%
\begin{example}
The solution sets of the following systems are topologically conjugate.
\begin{enumerate}
	\item $\dot x \in [1/2,1]$.
	\item $\dot x \in [1, 2].$
	\item $\dot x \in [-1,-1/2]$.
\end{enumerate}
\end{example}
\begin{proof}
Let the solution set of (1) be $S_1$, that of (2) be $S_2$, and that of (3) be $S_3$. We define an isomorphism between $S_1^*\mathbb{R}^2$ and $S_2^*\mathbb{R}^2$ by
\[
\begin{aligned}
\eta_1(\phi) &:= 2 \phi\\
k_1(t,x) &:= (t, 2 x)\\
H_1(t,\phi) &:= (t, \eta_1(\phi)).
\end{aligned}
\]
Because $\mathcal{D}_{\phi}(t) = t$ for all $\phi \in S_1,$ $S_1$ and $S_2$ are topologically conjugate. 

For $S_1^*\mathbb{R}^2$ and $S_3^*\mathbb{R}^2,$ we define
 \[
\begin{aligned}
\eta_2(\phi) &:= - \phi\\
k_2(t,x) &:= (t, - x)\\
H_2(t,\phi) &:= (t, \eta_2(\phi)).
\end{aligned}
\]
Then $S_1$ and $S_3$ are topologically conjugate. 
\end{proof}
\begin{example}
For a compact metric space $X$, let us consider the group of automorphisms $\mathrm{Aut}(X)$ and the space $C(X)$ of all continuous functions on $X,$ each topologized by the compact-open topology. We define $S_X := C(\mathrm{Aut}(X),C(X))$ and $W_X := \mathrm{Aut}(X) \times C(X).$

If $X$ and $Y$ are homeomorphic, $S_X^* W_X$ and $S_Y^* W_Y$ are isomorphic via a phase space-preserving isomorphism.
\end{example}
\begin{proof}
Let $h: X \to Y$ be a homeomorphism. Then we have a natural isomorphism $\tau : \mathrm{Aut}(X) \to \mathrm{Aut}(Y)$ defined by $\tau(g) := h\circ g \circ h^{-1}$ and the pullback $h^* : C(Y) \to C(X),$ which is a homeomorphism. 

We define the maps
\[
\begin{aligned}
	\eta(\phi) &:= (h^*)^{-1}\circ\phi \circ \tau^{-1}\\
	k(g,f) &:= ( \tau(g), (h^*)^{-1}(f) )\\
	H(g, \phi)& = \left( \tau(g), \eta(\phi)\right),
\end{aligned}
\]
 for each $g \in  \mathrm{Aut}(X)$, $f \in C(X)$ and $\phi \in S_X.$ Using the continuity of compositions, it can be shown that these three maps are homeomorphisms. Thus, $S_X^* W_X$ and $S_Y^* W_Y$ are isomorphic via a phase space-preserving isomorphism.
\end{proof}

\section{Concluding remarks}

As the purpose of this article is to present a basic framework, further study on its application to various problems is necessary. In particular, bifurcation or structural stability can be studied once we have introduced a proper notion of the ``distance" between systems.

Regarding the dynamical behavior, our discussion here has been limited to weakly invariant sets and equilibrium points.  However, it appears possible to generalize other notions too.

\section*{Acknowledgements}
This study was supported by a Grant-in-Aid for JSPS Fellows
(20J01101). 
\bibliographystyle{plain}
\bibliography{refs}
\end{document}